\newtheorem{defn}{Definition}
\newtheorem{proposition}[defn]{Proposition}
\newtheorem{definition}[defn]{Definition}
\newtheorem{remark}[defn]{Remark}
\newtheorem{lemma}[defn]{Lemma}
\newtheorem{corollary}[defn]{Corollary}
\newtheorem{theorem}[defn]{Theorem}
\newtheorem{assumption}[defn]{Assumption}
\DeclareMathOperator{\dist}{dist}
\newcommand{\blockbmatrix}[3][]{%
	\begin{tikzpicture}
		[
		baseline=-\the\dimexpr\fontdimen22\textfont2\relax,
		inner sep=1pt,
		outer sep=0pt,
		every left delimiter/.style={xshift=.2ex},
		every right delimiter/.style={xshift=-.2ex}
		]
		\matrix
		[
		matrix of math nodes,
		left delimiter={[},
		right delimiter={]},
		nodes={minimum width=1em},
		execute at begin cell={\mathstrut},
		execute at empty cell={\node{0};},
		ampersand replacement=\&
		] (m)
		{
			#2\\
		};
		
		\foreach \i/\j in {#1}
		{
			\draw[densely dotted,semithick] (m-\i-\i.north west) rectangle (m-\j-\j.south east);
		}
		
		#3
	\end{tikzpicture}%
}
\newcommand{\vertiii}[1]{{\vert\kern-0.25ex\vert\kern-0.25ex\vert #1 
		\vert\kern-0.25ex\vert\kern-0.25ex\vert}}
\newcommand{\mcl}[1]{\mathcal #1}
\begin{document}

\title{Manifold Turnpikes, Trims and Symmetries
}

\author{Timm Faulwasser\footnote{T.~Faulwasser,
		Institute for Energy Systems, Energy Efficiency and Energy Economics, TU Dortmund University,  Germany, e-mail: timm.faulwasser@ieee.org}
\and
Kathrin Fla\ss{}kamp\footnote{K. Fla{\ss}kamp, 
              Systems Modeling and Simulation, Saarland University, Germany, e-mail: kathrin.flasskamp@uni-saarland.de}
\and
Sina Ober-Bl\"{o}baum\footnote{S. Ober-Bl\"obaum,
	Department of Mathematics, University of Paderborn, Germany, e-mail: sinaober@math.upb.de}
\and
Manuel Schaller\footnote{M. Schaller,
	Institut f\"{u}r Mathematik,
	Technische Universit\"{a}t Ilmenau, Germany,
	email: manuel.schaller@tu-ilmenau.de. M.\ Schaller was funded by the DFG (grant WO\ 2056/2-1)}
\and
Karl Worthmann\footnote{K. Worthmann,
          Institut f\"{u}r Mathematik,
         Technische Universit\"{a}t Ilmenau, Germany,
        email: karl.worthmann@tu-ilmenau.de. K.\ Worthmann gratefully acknowledges funding by the German Research Foundation (DFG; grant WO\ 2056/6-1, project number 406141926)}}

\maketitle

\begin{abstract}                
Classical turnpikes correspond to optimal steady states which are attractors of optimal control problems. In this paper, motivated by mechanical systems with symmetries, we generalize this concept to manifold turnpikes. Specifically, the necessary optimality conditions on a symmetry-induced manifold coincide with those of a reduced-order problem under certain conditions. We also propose sufficient conditions for the existence of manifold turnpikes based on a tailored notion of dissipativity with respect to manifolds. We show how the classical Legendre transformation between Euler-Lagrange and Hamilton formalisms can be extended to the adjoint variables. Finally, we draw upon the Kepler problem to illustrate our findings.
\end{abstract}


\section{Introduction}
Studying the dynamics of classical mechanical systems has a long history.
In particular, the differential geometric viewpoint, which focuses on coordinate-invariant descriptions of mechanical system dynamics, is given, e.g., \ in \cite{MarsdenRatiu} and taken over to optimal control in \cite{BuLe04}.
Here, system dynamics are encoded either in the Lagrangian or the Hamiltonian function which lead to the well-known Euler-Lagrange equations or Hamilton equations, respectively.
Of particular interest in this context is the study of symmetries.
For mechanical systems, symmetries are characterized by an invariance of the Lagrangian with respect to translations or rotations of the system, for instance.
These symmetries can be described by actions of a Lie group.
Due to symmetry, equivalent trajectories exist, i.e., \ possibly controlled system trajectories which are identical modulo the action of the Lie group.
The close relation of symmetries and conserved quantities of dynamical systems goes back to Noether's fundamental insights obtained in the 1920s. 
Symmetry can be exploited to reduce the system dimension, see e.g.\ \cite{MarsdenRatiu} for an introduction to symmetry-reduction.
Dynamical systems with symmetry might show further structure in terms of relative equilibria, which are system motions that are completely generated from the symmetry action and thus, partially stationary in all other directions.
Relative equilibria are then obtained as  steady states of a symmetry reduced system.
This can clearly be seen in the historical setting of Routh, wherein symmetry is assumed as invariance w.r.t.\ a subset of all configuration states (see e.g.\ \cite{Bloch} for a concise introduction).
While, classically, relative equilibria have been studied for uncontrolled systems, Frazzoli et al.\ give a generalization termed \emph{trim primitive} \cite{FrDaFe05}.
Trim primitives can be exploited in the analysis of optimal control problems (OCP), in motion planning,  or in model predictive control of dynamical systems \cite{FrDaFe05,FOK12,Fla2013,FlasOber19}.

In the context of optimal control in economics, the notation of turnpike phenomena dates back to the foundational book of Dorfman~\cite{Dorfman58}, while earlier reference mentioning the phenomenon can be traced back to Ramsey~\cite{Ramsey28} or von Neumann~\cite{vonNeumann38}. The turnpike phenomenon refers to a similarity property of OCPs whereby for varying initial conditions and varying horizon lengths the optimal solutions approach the neighborhood of a specific steady state during the middle part of the horizon and the time spend close to this steady state (a.k.a. the turnpike) grows as the horizon increases.  Analysis and investigation of this concept  are a classical branch of optimal control for economics, cf.~\cite{Mckenzie76,Carlson91}. However, recently there has been a renewed interest in turnpike properties for optimal control of finite-and infinite-dimensional systems~\cite{Lance20a,Gugat19a,Gruene2019,Stieler14a} and in the context of receding-horizon solutions to OCPs~\cite{Gruene13a,kit:faulwasser18c}. Interestingly, there exists a close relation between turnpike properties and dissipativity notions in OCPs, see~\cite{Gruene16a,Faulwasser2017}. The main advantage of the dissipativity-based approach to turnpike results is that it allows to uncover fundamental mechanisms generating the turnpike phenomenon, see \cite{tudo:faulwasser20l} for a recent literature overview. This way, it goes beyond the economics inspired approach which identifies the phenomenon in specific problems and only rarely asked for generalized analysis. 

Moreover, it deserves to be noticed that the turnpike---i.e., the steady state which is approached be the optimal finite-horizon solutions and which under suitable conditions turns out to be a stable equilibrium of the infinite-horizon optimal solutions~\cite{tudo:faulwasser20a}---can be regarded as the attractor of the infinite-horizon OCP. Hence it is far from surprise that this attractor can be more general than a simple equilibrium. For example, in~\cite{Samuelson76} a periodic turnpike theorem is introduced. Moreover, in~\cite{FaulFlas19,tudo:faulwasser20e} we analysed a class of time-varying turnpike properties induced by symmetry in a specific class of OCPs formulated on Euler-Lagrange equations. Recently in \cite{Schaller2020a} it was shown that minimization of supplied energy in the context of port-Hamiltonian systems gives rise to an entire linear subspace of turnpikes.

The contribution of the present paper is to link the realms of turnpikes, trim solutions, and symmetries in OCPs for mechanical systems. 
Specifically, we consider Lagrangian systems with symmetries. Based on the established concepts of trim solutions, we show that if either one first formulates the OCP and then applies the trim condition to the optimality system, or one first applies the trim condition and then formulates a reduced OCP, one obtains the same result. While at first glance this looks not surprising, the commutativity of problem reduction and optimization generalizes a classical insight, wherein turnpikes are characterized as the attractive steady states of the optimality system~\cite{Trelat15a,kit:zanon18a,tudo:faulwasser20h,Gruene2019}. Specifically, this approach provides a handle to characterize time-varying turnpike solutions via a reduced OCP. Moreover, we show that under mild assumptions---i.e., if one allows non-equilibrium solutions travelling through the trim manifold---a dissipativity concept enables an elegant characterization. Specifically, we introduce a notion of dissipation of optimal solutions with respect to the distance to a manifold (here the trim manifold) and we show that this implies that optimal system operation indeed occurs on this manifold. Moreover, we show that the very same dissipativity condition implies the existence of a measure turnpike with respect to the trim manifold, i.e., the optimal solutions will spend only limited amount of time off the this manifold. 
In sum, the present paper does not only generalize our previous conference publications~\cite{FaulFlas19,tudo:faulwasser20e}, it also introduces a novel manifold generalization of the established dissipativity notion for OCPs.

The remainder of the paper is structured as follows: In Section \ref{sec:Preliminaries} we introduce the problem setting and provide background on symmetries and trims as well as on Lagrangian and Hamiltonian systems. Section \ref{sec:generalforcing} provides novel results on the equivalence of first applying a specific problem reduction and then optimizing with the reversed order sequence. Moreover, in this section we also introduce the concept of dissipativity of OCPs with respect to manifolds and we show that this allows to state sufficient conditions for a generalized turnpike property on manifolds, whereby the manifold may or may not be induced by an underlying symmetry. Section \ref{sec:trim_tp:H} links the former results to mechanical systems in Hamiltonian form and shows how one may elegantly map the adjoint variables between OCPs for Lagrangian and OCPs for Hamiltonian systems. Finally, Section \ref{sec:Kepler} illustrates our findings considering the Kepler problem. This paper ends with conclusions and an outlook.

\noindent{\textbf{Notation}: $\mathbb{N}$ denotes the positive integers, $\mathbb{N}_0 \doteq    \mathbb{N} \cup \{0\}$, and $\mathbb{R}$ represents the real numbers. $L^\infty([0,T],\mathbb{R}^n)$ is the space of Lebesgue-measurable, essentially bounded functions on the interval~$[0,T]$ mapping into~$\mathbb{R}^n$, $n \in \mathbb{N}$. Moreover, the Sobolev space $W^{1,\infty}([0,T],\mathbb{R}^n)$ is the linear space of all functions $x:[0,T] \rightarrow \mathbb{R}^n$ such that $x,\dot{x} \in L^\infty([0,T],\mathbb{R}^n)$ where $\dot{x}$ denotes the weak time derivative of~$x$. Furthermore, for $x \in \mathbb{R}^n$ and a nonempty set $S \subseteq \mathbb{R}^n$, $\dist(x,S)$ is defined by $\inf_{y \in S} \| x - y \|$ where $\| \cdot \|$ denotes the Euclidean distance in~$\mathbb{R}^n$.}

\section{Lagrangian systems with cyclic variables}\label{sec:Preliminaries}

%
%

Mechanical systems can be described by the Lagrange function
\begin{align}\label{eq:LagrangeFunction}
	L(q,\dot{q}) = \frac 12 \dot{q}^\top M(q) \dot{q} - V(q),
\end{align}
composed of the kinetic energy $\frac 12 \dot{q}^\top M(q) \dot{q}$ with symmetric mass matrix $M(q) \in \mathbb{R}^{n \times n}$ %
and potential energy~$V$. %
Here, the time-dependent configuration variables are denoted by $q = q(t) \in Q$, where $Q$ is the $n$-dimensional configuration manifold. The corresponding velocities $\dot{q} = \dot{q}(t)$ lie in the tangent space~$T_qQ$ at~$q$. The tangent bundle is denoted by~$TQ$, its dual, the cotangent bundle, by $T^*Q$.

The Euler-Lagrange equation
\begin{equation}\label{ELStandard}
	\frac {\mathrm{d}}{\mathrm{d}\,t} \frac {\partial}{\partial\,\dot{q}} L(q,\dot{q}) - \frac {\partial}{\partial\,q} L(q,\dot{q}) = f(u)
\end{equation}
with forcing term $f: \mathbb{R}^{m} \rightarrow {T^*Q}$, $m\in \mathbb{N}$, reads  
\begin{align}
	& & M(q) \ddot{q} + \nabla m(q,\dot{q}) \dot{q} - \frac 12 \dot{q}^\top \nabla m(q,\dot{q}) + \nabla V(q) & = f(u) \label{eq:ELreformulated}
\end{align}
where we used the abbreviation $\nabla m(q,\dot{q}) \doteq    \frac {\partial}{\partial\,q} \left(M(q) \dot{q}\right)$ to avoid tensor calculations. %

\subsection{Cyclic variables}\label{subsec:symmetry}


A subclass of (Lagrangian) systems exhibits \emph{cyclic variables}, which induce the following structure. The configuration space can be split into copies of $S^1$ and the so-called shape space $S$, i.e.,~$Q = S \times (S^1 \times \ldots \times S^1)$. Accordingly, the configuration variables $q$ are split into shape variables $s$ and cyclic variables~$\theta$, i.e., $q = (s, \theta)$, where the dimension of~$\theta$ defines the number of copies of~$S^1$. The variables $\theta$ are those variables, which do not appear explicitly in the kinetic and potential energy, although their velocities do. This directly implies that also the Lagrangian is independent of the \emph{cyclic variables} $\theta$,
\[ L(q,\dot{q}) = L(s,\dot{s},\dot{\theta})\]
Finally, the existence of cyclic variables induces symmetry in the unforced system, i.e.\ $f(u)\equiv 0$.
The Euler-Lagrange equations \eqref{ELStandard} for $\theta$ reduce to
\[
\frac {\mathrm{d}}{\mathrm{d}\,t}  \frac {\partial}{\partial\,\dot{\theta}} L(s,\dot{s},\dot{\theta}) = 0,
\]
i.e.\ $p_\theta \doteq \frac {\partial}{\partial\,\dot{\theta}} L(s,\dot{s},\dot{\theta}) $ is constant along system motions.
This is a special case of Noether's theorem, which relates system symmetry to the existence of a conserved quantity. In our case, the system symmetry is induced by the invariance of $L$ w.r.t.\ shifts in $\theta$, 

\subsection{Block-diagonal mass matrix structure}

As the mass matrix $M$ and the potential energy~$V$ are both independent of the cyclic variables~$\theta$ we may write
\begin{align}\nonumber
	M(q) = \begin{bmatrix}
		M_{11}(s) & M_{12}(s) \\
		M_{21}(s) & M_{22}(s)
	\end{bmatrix} \qquad\text{ and }\qquad V(q) = V(s).
\end{align}
In the subsequent analysis, we make the following assumption to avoid technicalities.
\begin{assumption}\label{Ass:BlockDiagonalForcing}
	Let the matrix~$M$ be block diagonal, i.e.\ $M_{12} = M_{21} = 0$ holds. Let further $M(q)=M(s)$ be regular for all $s\in S$.
\end{assumption}

Invoking Assumption~\ref{Ass:BlockDiagonalForcing} to make use of the block-diagonal inertia matrix~$M$, the Euler-Lagrange equation~\eqref{eq:ELreformulated} can be written as \begin{equation}
	\begin{bmatrix}
		M_{11} & 0 \\ 0 & M_{22}
	\end{bmatrix}\hspace*{-0.1cm} \begin{bmatrix}
		\ddot{s} \\ \ddot{\theta}
	\end{bmatrix} \hspace*{-0.05cm} + \hspace*{-0.05cm} \begin{bmatrix}
		\frac{\partial}{\partial s}(M_{11} \dot{s}) \dot{s} \\
		\frac{\partial}{\partial s} (M_{22} \dot{\theta}) \dot{s}
	\end{bmatrix} \hspace*{-0.05cm}-\hspace*{-0.05cm} \frac 12 \hspace*{-0.05cm}\begin{bmatrix}
		\dot{s}^\top \frac{\partial}{\partial s}(M_{11} \dot{s}) \hspace*{-0.05cm}+\hspace*{-0.05cm} \dot{\theta}^\top \frac{\partial}{\partial s} (M_{22} \dot{\theta}) \\ 0
	\end{bmatrix} \hspace*{-0.05cm}+\hspace*{-0.05cm} \begin{bmatrix}
		\frac {\partial V(s)}{\partial\,s} \\ 0
	\end{bmatrix} = \begin{bmatrix} f_s(u)\\ f_\theta(u)\end{bmatrix} , \nonumber
\end{equation}
where we suppressed the argument~$s$ of the matrices~$M_{ii}$, $i \in \{1,2\}$. Clearly, this can be written as a first-order system
\begin{equation}\label{eq:ELBlockDiag_vs}\tag{EL}
	\begin{split}
		\dot{s} & = v_s \\
		\dot{v}_s & = M_{11}^{-1}(s) \left(\textstyle \frac {v_s^\top}2 \frac{\partial (M_{11}(s) v_s)}{\partial s} + \frac {v_{\theta}^\top}2 \frac{\partial (M_{22}(s) v_\theta)}{\partial s} - \frac{\partial (M_{11}(s) v_s)}{\partial s} v_s - \frac {\partial V(s)}{\partial\,s} + f_s(u) \right)  \\
		\dot{\theta} & = v_\theta  \\
		\dot{v}_\theta & = M_{22}^{-1}(s) \left(-\frac{\partial}{\partial s} (M_{22}(s) v_\theta) v_s  + f_\theta(u) \right). 
	\end{split} 
\end{equation}

\begin{remark}[Non-orthogonal forcing]\label{rem:orthForcing}
	Note that what we observed in Sect.~\ref{subsec:symmetry}, i.e.\
	\begin{equation}\label{eq:conservedQuant}
		p_\theta \doteq \frac {\partial}{\partial\,v_\theta} L(s,\dot{s},v_\theta) = M_{22}(s) v_{\theta} =  const.,
	\end{equation}
	does not hold in \eqref{eq:ELBlockDiag_vs} in the presence of forces.
	However, in case of \emph{orthogonal forcing} (i.e.\ orthogonal to the subspace spanned by cyclic variables), namely $f_\theta(u)\equiv0$, $p_\theta$
	remains an invariant along solutions to \eqref{eq:ELBlockDiag_vs}.
\end{remark}

\subsection{Trim primitives}

Symmetry in Lagrangian systems may lead to the existence of special trajectories, so-called \emph{trim primitives} (\emph{trims} for short).

\begin{definition}[Trim Primitive] \label{def:Trims}
	Let a Lagrangian system \eqref{eq:ELBlockDiag_vs} be given.
	Assume orthogonal forcing,  i.e.\ $f_\theta \equiv 0$ holds.
	Then, a trajectory  of the \eqref{eq:ELBlockDiag_vs} system\\
	$(s,v_s,\theta,v_\theta)(t; (s^0,v_s^0,\theta^0,v_\theta^0))$
	emanating from the initial state $(s^0,v_s^0,\theta^0,v_\theta^0)$ with constant input $u(t) \equiv \bar{u}$, %
	is called a \textit{trim (primitive)} if it can be written for all $t\geq 0$ as 
	\begin{align} \label{eq:defTrim}
		\begin{split}
			s(t) & = s^0,\\
			v_s(t) & = 0,\\
			\theta(t) & = \theta^0 + v_\theta^0 \cdot t,\\
			v_\theta(t) & = v_\theta^0.
		\end{split}
	\end{align}
\end{definition}

Roughly speaking, a trim is a motion of constant velocity only in the direction of the cyclic variables while the shape space variables are constant.
Note that we can explicitly write down this specific type of trajectories, although we do not assume to know the solution of \eqref{eq:ELBlockDiag_vs} in general.

Formally, trim primitives are motions along the group orbits of the symmetry group
and they correspond to relative equilibria in the uncontrolled case; for details we refer to \cite{Fla2013,FOK12} and \cite[Section~2]{FlasOber19} for an illustrative example. %
We detail both characterizations in the following.

\subsubsection{Trim characterization via controlled potentials}
Relative equilibria can be found by solving for the critical points of the \emph{amended potential} \cite{MarsdenRatiu}.
In \cite{Fla2013,FOK12}, this approach has been extended to controlled potentials in order to compute trim primitives.
Consider the function $\nu(s,u) \doteq    s^\top  f_s(u)$. Clearly,
$ 
\frac{\partial \nu}{\partial s} = f_s(u)
$
holds. Then, we define the \emph{forced potential}
\begin{equation}\label{eq:forcedPotential}
	V^u(s,u) \doteq    V(s) - \nu(s,u).
\end{equation}
Note that the Euler-Lagrange equations~\eqref{eq:ELBlockDiag_vs} with orthogonal forcing can also be derived as unforced Euler-Lagrange equations with $V^u$ replacing the original potential $V$.
Interpreting the forcing as an additional parametrized potential allows us to apply the classical theory of relative equilibria \cite{MarsdenRatiu}: %
The locked inertia tensor, i.e.\ the inertia tensor
if shape variables~$s$ are fixed,  is given by~$M_{22}(s)$. %
Then, we fix a value $\mu \,  \in T^\star_q Q$ of the conserved quantity \eqref{eq:conservedQuant}, $p_\theta = M_{22}(s) v_{\theta}$.
For instance, using the initial values~$s^0$ and~$v_\theta^0$ yields $\mu = M_{22}(s^0) v_{\theta}^0$. %
Lastly, we define the \emph{forced amended potential} as
\begin{align}\label{Eq:ForcedAmendedPotential}
	V_\mu^u(s,\mu,u) & \doteq    V^u(s,u) + \frac 12 \mu^\top M_{22}^{-1}(s) \mu. 
\end{align}
The following central lemma yields a way to derive trim primitives. A proof can be found in \cite{Fla2013,FOK12}.
\begin{lemma}\label{LemmaTrimPrimitives}
	Consider a Lagrangian system \eqref{eq:ELBlockDiag_vs} with orthogonal forcing, i.e.\ $f_\theta \equiv 0$.
	Let $(\hat{s},\hat{\mu},\hat{u})\in S\times T^\star_q Q\times \mathbb{R}^{m}$ 	
	be a critical point of the forced amended potential \eqref{Eq:ForcedAmendedPotential}, i.e.
	\begin{align}\label{eq:trimCritPointVumu}
		\nabla_s V_\mu^u(\hat{s},\hat{\mu},\hat{u}) = \frac{\partial V(s)}{\partial s} + \frac 12 \mu^\top \frac {\partial}{\partial s} (M^{-1}_{22}(s) \mu) -f_s(u) = 0.
	\end{align}
	Then, 	$(\hat{s},\hat{\mu},\hat{u})$ defines a trim primitive in the following way:
	\begin{align*}
		s(t) & = \hat{s}, &
		v_s(t)& = 0,\\
		\theta(t) & = \theta^0 + M_{22}(\hat{s})^{-1} \hat{\mu} \cdot t, &
		v_\theta(t) & =  M_{22}(\hat{s})^{-1} \hat{\mu},\\
		u(t) & = \hat{u},
	\end{align*}
	with $\theta^0$ being an arbitrary initial value of the cyclic variable.
\end{lemma}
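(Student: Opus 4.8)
The plan is to verify directly that the proposed functions solve the first-order system \eqref{eq:ELBlockDiag_vs}. First I would observe that these functions manifestly have the structure demanded by Definition~\ref{def:Trims}: $s$ is constant, $v_s$ vanishes, $\theta$ is affine in $t$, and $v_\theta$ is constant. Hence it suffices to check that, for the constant input $u\equiv\hat u$, they constitute a trajectory of \eqref{eq:ELBlockDiag_vs}. The equations $\dot s = v_s$ and $\dot\theta = v_\theta$ hold trivially, since both sides are the respective constants. For the $v_\theta$-equation, inserting $v_s = 0$ together with the orthogonality assumption $f_\theta\equiv 0$ makes its right-hand side $M_{22}^{-1}(\hat s)\bigl(-\tfrac{\partial}{\partial s}(M_{22}(\hat s)v_\theta)\,v_s + f_\theta(\hat u)\bigr)$ vanish, so $\dot v_\theta = 0$ as required.

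The core of the argument is the $v_s$-equation. Setting $v_s = 0$ annihilates the two terms containing $\tfrac{\partial}{\partial s}(M_{11}(s)v_s)$, and since $M_{11}(\hat s)$ is regular by Assumption~\ref{Ass:BlockDiagonalForcing}, the requirement $\dot v_s = 0$ collapses to
\begin{equation*}
	\tfrac12\, v_\theta^\top \frac{\partial (M_{22}(\hat s) v_\theta)}{\partial s} - \frac{\partial V(\hat s)}{\partial s} + f_s(\hat u) = 0 .
\end{equation*}
I would then compare this with the critical-point condition \eqref{eq:trimCritPointVumu}, which, recalling $\hat\mu = M_{22}(\hat s) v_\theta$ from the definition of the conserved quantity \eqref{eq:conservedQuant}, states that $\tfrac12\,\hat\mu^\top \tfrac{\partial}{\partial s}(M_{22}^{-1}(s)\hat\mu) = f_s(\hat u) - \tfrac{\partial V(\hat s)}{\partial s}$. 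The two displays coincide exactly when the centrifugal term assembled from $M_{22}$ equals the negative of the one assembled from $M_{22}^{-1}$.

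Establishing this identification is the one nontrivial ingredient, and the step I expect to be the main obstacle, chiefly for bookkeeping reasons. Read componentwise, the two terms are ordinary gradients of scalar quadratic forms, namely $v_\theta^\top \tfrac{\partial}{\partial s}(M_{22} v_\theta) = \tfrac{\partial}{\partial s}\bigl(v_\theta^\top M_{22}(s)v_\theta\bigr)$ with $v_\theta$ held fixed, and $\hat\mu^\top \tfrac{\partial}{\partial s}(M_{22}^{-1}\hat\mu) = \tfrac{\partial}{\partial s}\bigl(\hat\mu^\top M_{22}^{-1}(s)\hat\mu\bigr)$ with $\hat\mu$ held fixed. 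I would then apply the standard inverse-differentiation rule $\tfrac{\partial}{\partial s}M_{22}^{-1} = -M_{22}^{-1}\bigl(\tfrac{\partial}{\partial s}M_{22}\bigr)M_{22}^{-1}$ and substitute $\hat\mu = M_{22}(\hat s)v_\theta$; at $s=\hat s$ the flanking factors $M_{22}^{-1}M_{22}$ cancel on both sides, yielding $\hat\mu^\top \tfrac{\partial}{\partial s}(M_{22}^{-1}\hat\mu) = -\,v_\theta^\top \tfrac{\partial}{\partial s}(M_{22} v_\theta)$ at $s=\hat s$. Feeding this back turns the reduced $v_s$-equation into precisely \eqref{eq:trimCritPointVumu}, so $\dot v_s = 0$ and the verification is complete. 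The only real care needed is to keep track of which vector is frozen inside each partial derivative and to respect the evaluation at $s=\hat s$ throughout, since $v_\theta = M_{22}(\hat s)^{-1}\hat\mu$ itself depends on $\hat s$.
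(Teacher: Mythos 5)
Your proof is correct and follows essentially the same route the paper takes: it defers the proof of this lemma to the cited references, but the direct verification you give is exactly the content of Lemma~\ref{lemma:T} combined with Corollary~\ref{cor:VmuT}, whose key ingredient is the same inverse-differentiation identity \eqref{eq:MassInv} you invoke to convert the $M_{22}^{-1}$-centrifugal term into the $M_{22}$-one. The bookkeeping step you flag as the main obstacle goes through as you describe, using the symmetry of $M_{22}$ so that $\hat\mu^\top M_{22}^{-1}(\hat s) = v_\theta^\top$.
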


\subsubsection{Trim characterization via partial steady states}
A trim requires the shape space variables $s$ to be at steady state.
Then, $\dot{s} = v_s = 0$ holds along the trim trajectory and thus, also $\dot{v}_s = 0$. This simplifies the corresponding differential equation in \eqref{eq:ELBlockDiag_vs} to (recall $M_{11}$ is assumed to be regular for all $s$)
\begin{equation*}
	\frac 12 v_\theta^\top \frac{\partial }{\partial s} (M_{22}(s) v_\theta) - \frac{\partial}{\partial s} V(s) + f_s(u) = 0.
\end{equation*}


\begin{lemma}[Trim condition]\label{lemma:T}
	Consider a Lagrangian system \eqref{eq:ELBlockDiag_vs} with orthogonal forcing, i.e.\ $f_\theta \equiv 0$.
	Let a function $T$ be given by
	\begin{align} \label{eq:T}
		T(s,v_\theta, u)\doteq  M_{11}^{-1}(s) \left(   \frac 12 v_\theta^\top \frac{\partial }{\partial s} (M_{22}(s) v_\theta) - \frac{\partial}{\partial s} V(s) + f_s(u) \right).
	\end{align}
	Then, a triple $(\tilde{s},\tilde{v}_\theta,\tilde{u})$ which satisfies $T(\tilde{s},\tilde{v}_\theta,\tilde{u}) = 0$ defines a trim primitive when setting $s^0=\tilde{s}$, $v_\theta^0=\tilde{v}_\theta$ and $\bar{u}=\tilde{u}$ in Definition~\ref{def:Trims},
	with $\theta^0$ being an arbitrary initial value of the cyclic variable.
\end{lemma}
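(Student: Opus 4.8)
The plan is to verify directly that the trajectory defined by $s^0=\tilde{s}$, $v_s^0=0$, $v_\theta^0=\tilde{v}_\theta$, $\bar{u}=\tilde{u}$ (with arbitrary $\theta^0$) satisfies the four equations of the first-order system \eqref{eq:ELBlockDiag_vs}. The candidate trajectory is the constant-velocity motion from Definition~\ref{def:Trims}, so I would substitute the ansatz $s(t)\equiv\tilde{s}$, $v_s(t)\equiv 0$, $\theta(t)=\theta^0+\tilde{v}_\theta\,t$, $v_\theta(t)\equiv\tilde{v}_\theta$ into each of the four equations and check that both sides agree for all $t\ge 0$. The point is that once we know this ansatz solves \eqref{eq:ELBlockDiag_vs}, it is by construction of the form \eqref{eq:defTrim}, hence a trim primitive.

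First I would dispatch the three easy equations. The equation $\dot{s}=v_s$ holds trivially since $s$ is constant and $v_s\equiv 0$. The equation $\dot{\theta}=v_\theta$ holds because $\frac{\mathrm d}{\mathrm dt}(\theta^0+\tilde{v}_\theta t)=\tilde{v}_\theta=v_\theta$. For the $v_\theta$-equation, orthogonal forcing gives $f_\theta(\tilde{u})=0$, and since $v_s\equiv 0$ the right-hand side of the $\dot{v}_\theta$-equation is $M_{22}^{-1}(\tilde{s})\left(-\frac{\partial}{\partial s}(M_{22}(\tilde{s})\tilde{v}_\theta)\cdot 0+0\right)=0$, which matches $\dot{v}_\theta=0$ coming from $v_\theta\equiv\tilde{v}_\theta$ constant.

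The substantive step is the $\dot{v}_s$-equation. Since $v_s\equiv 0$ is constant we need $\dot{v}_s=0$, so I must show the right-hand side vanishes. Setting $v_s=0$ kills the two terms in the bracket that are multiplied by $v_s$, leaving $M_{11}^{-1}(\tilde{s})\left(\frac{\tilde{v}_\theta^\top}{2}\frac{\partial(M_{22}(\tilde{s})\tilde{v}_\theta)}{\partial s}-\frac{\partial V(\tilde{s})}{\partial s}+f_s(\tilde{u})\right)$. This bracket is exactly $M_{11}(\tilde{s})\,T(\tilde{s},\tilde{v}_\theta,\tilde{u})$ by the definition \eqref{eq:T} of $T$, so the right-hand side equals $T(\tilde{s},\tilde{v}_\theta,\tilde{u})$, which is zero by hypothesis. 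Hence $\dot{v}_s=0$ is satisfied and the ansatz is genuinely a solution.

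I do not expect a serious obstacle here; the proof is essentially a direct verification. The only point requiring mild care is matching the bracketed expression in the $\dot{v}_s$-equation of \eqref{eq:ELBlockDiag_vs} with the definition of $T$ in \eqref{eq:T} after imposing $v_s=0$ and exploiting the regularity of $M_{11}(\tilde{s})$ (guaranteed by Assumption~\ref{Ass:BlockDiagonalForcing}) so that $T=0$ is equivalent to the bracket vanishing. In essence, Lemma~\ref{lemma:T} is the reformulation of the partial-steady-state condition already derived just above its statement, repackaged through the map $T$; the lemma simply records that any zero of $T$ yields initial data that generate a trim.
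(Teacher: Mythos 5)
Your verification is correct and follows essentially the same route as the paper, which proves the lemma via the short ``partial steady state'' derivation immediately preceding its statement: imposing $v_s\equiv 0$ reduces the $\dot v_s$-equation of \eqref{eq:ELBlockDiag_vs} to $T(\tilde s,\tilde v_\theta,\tilde u)=0$, while the remaining equations are trivially satisfied by the constant-velocity ansatz. Your only unstated (and harmless) ingredient is uniqueness of solutions, needed so that the exhibited ansatz is indeed \emph{the} trajectory emanating from the given initial data with constant input.
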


\begin{corollary}\label{cor:VmuT}
	Every triple $(\hat{s},\hat{\mu},\hat{u})$ which satisfies \eqref{eq:trimCritPointVumu} in Lemma~\ref{LemmaTrimPrimitives}, also satisfies\\ $T(\hat{s},\hat{v}_\theta,\hat{u})=0$ with $\hat{v}_\theta = M_{22}(\hat{s})^{-1} \hat{\mu}$ in Lemma~\ref{lemma:T} and vice versa, using
	\begin{align} \label{eq:MassInv}
		M^\prime_{22}(s)^{-1} = - M_{22}^{-1}(s) M^\prime_{22}(s) M_{22}^{-1}(s).
	\end{align}
\end{corollary}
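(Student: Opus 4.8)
The plan is to show that the two defining relations---the critical-point condition \eqref{eq:trimCritPointVumu} of the forced amended potential and the trim condition $T=0$ of Lemma~\ref{lemma:T}---are algebraically identical once the substitution $\hat v_\theta = M_{22}^{-1}(\hat s)\hat\mu$ is inserted. Since every manipulation I use is an equivalence, the ``vice versa'' direction comes for free, so I handle both directions at once.

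First I would exploit the regularity of $M_{11}$ guaranteed by Assumption~\ref{Ass:BlockDiagonalForcing}: because $M_{11}^{-1}(\hat s)$ is invertible, the equation $T(\hat s,\hat v_\theta,\hat u)=0$ holds if and only if the bracketed factor in \eqref{eq:T} vanishes, namely
\begin{align*}
\tfrac 12 \hat v_\theta^\top \frac{\partial}{\partial s}\bigl(M_{22}(s)\hat v_\theta\bigr) - \frac{\partial V(s)}{\partial s} + f_s(\hat u) = 0.
\end{align*}
Rearranging both this identity and \eqref{eq:trimCritPointVumu} so that the common terms $-\partial V/\partial s + f_s(\hat u)$ sit on one side, the whole claim reduces to the single vector identity
\begin{align*}
\tfrac 12 \hat\mu^\top \frac{\partial}{\partial s}\bigl(M_{22}^{-1}(s)\hat\mu\bigr) = -\tfrac 12 \hat v_\theta^\top \frac{\partial}{\partial s}\bigl(M_{22}(s)\hat v_\theta\bigr),
\end{align*}
to be verified at $s=\hat s$ under the relation $\hat\mu = M_{22}(\hat s)\hat v_\theta$.

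The core computation is then the differentiation. Since $\hat\mu$ is a fixed constant, the left-hand side equals $\tfrac 12 \hat\mu^\top\bigl(\tfrac{\partial}{\partial s}M_{22}^{-1}(s)\bigr)\hat\mu$; here I apply the derivative-of-the-inverse identity \eqref{eq:MassInv} to rewrite $\tfrac{\partial}{\partial s}M_{22}^{-1}(s) = -M_{22}^{-1}(s)\bigl(\tfrac{\partial}{\partial s}M_{22}(s)\bigr)M_{22}^{-1}(s)$. Only \emph{after} differentiating do I evaluate at $s=\hat s$ and insert $M_{22}^{-1}(\hat s)\hat\mu = \hat v_\theta$; using that $M_{22}$ is symmetric, the left factor $\hat\mu^\top M_{22}^{-1}(\hat s)$ likewise becomes $\hat v_\theta^\top$. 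This collapses the left-hand side to $-\tfrac 12 \hat v_\theta^\top\bigl(\tfrac{\partial}{\partial s}M_{22}(s)\bigr)\big|_{s=\hat s}\hat v_\theta$, which is exactly the right-hand side, again because $\hat v_\theta$ is constant in the $s$-derivative.

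The only genuinely delicate point---and where I expect the one possible slip---is the ordering of operations: the relation $\hat\mu = M_{22}(\hat s)\hat v_\theta$ is purely pointwise at $\hat s$ and must not be invoked before differentiating, whereas $\hat\mu$ and $\hat v_\theta$ are themselves constant with respect to $s$ throughout. Keeping the rule ``differentiate first, substitute the value at $s=\hat s$ afterwards'' straight, together with the sign furnished by \eqref{eq:MassInv} and the symmetry of $M_{22}$, constitutes the entire content of the argument; everything else is routine rearrangement.
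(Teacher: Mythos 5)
Your proof is correct and follows exactly the route the paper intends: the corollary is stated without an explicit proof, with the hint that the identification rests on the derivative-of-the-inverse identity \eqref{eq:MassInv}, and your careful computation (differentiate first, then substitute $\hat\mu = M_{22}(\hat s)\hat v_\theta$, using symmetry of $M_{22}$) is precisely that argument. No gaps.
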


Finally, we introduce the \emph{trim manifold} as a $4n$-dimensional manifold in $TQ$.

\begin{definition}[Trim manifold]\label{rem:TrimManifold}
	The trim manifold $\mcl{T}$ is defined by
	\[
	\mcl{T} \doteq    \{ (s, v_s, \theta, v_\theta)^\top \in TQ \,|\, v_s = 0 \text{ and }  \exists u \in \mathbb{R}^{m}:~T(s,v_\theta,u) = 0 \},
	\]
	with $T$ defined by~\eqref{eq:T}, i.e., the manifold of states for which a control exists such that Definition~\ref{def:Trims} for trim primitives is satisfied. %
\end{definition}

Assuming orthogonal forcing ($f_\theta \equiv 0$) in
\eqref{eq:ELBlockDiag_vs}, each point in the trim manifold can serve as the initial value of a trim primitive.
If an initial point $(s^0, v_s^0, \theta^0, v_\theta^0) \in \mcl{T}$ and $\bar{u}$ are chosen such that $T(s^0,v_\theta^0,\bar{u})=0$ holds, then $(s,v_s,\theta,v_\theta)(t; (s^0,v_s^0,\theta^0,v_\theta^0)) \in \mcl{T}$ for all $t\geq 0$, i.e.\ the trim primitive stays within the trim manifold.

\begin{remark}\label{rem:outputZeroDynamics}
	Moreover, the function $T$ can be interpreted as an output map and the dynamics on $\mcl{T}$ can be understood as the \emph{zero dynamics} of \eqref{eq:ELBlockDiag_vs} w.r.t.\ $T(s,v_\theta, u) = 0$.
	Note that this viewpoint does not rely on the restriction to orthogonal forcing, i.e.\ allowing $f_\theta (u) \neq 0$, more solutions besides trims which evolve in $\mcl{T}$ may exist. For more details on zero-dynamics, see~\cite{Nijmeijer90a,Isidori95a} and \cite{Olfati02a} for the link to symmetries.
\end{remark}

In the following, we study the role of trim primitives and output-zeroing solutions in optimal control.
To simplify the further exposition, we will consider all variables to be scalar-valued, i.e.~$(s,\theta,v_s,v_\theta) \in \mathbb{R}^4$ and $M_{11}(s), M_{22}(s) \in \mathbb{R}$. 
Note that by fixing the set of coordinates, we also decide to leave the differential geometric setting and consider coordinates in the state space $\mathbb{R}^4$.
This leads to the following set of system equations
%
\begin{align}
	\begin{split}
		\dot{s} & = v_s, \\
		\dot{v}_s& = M_{11}^{-1} (s)\left( \frac 12 M'_{22}(s)  v_\theta^2(t) - \frac 12 M'_{11}(s) v^2_s-  V'(s) + f_s(u) \right) \label{eq:v_s} \\
		\dot{\theta} & = v_\theta  \\
		\dot{v}_\theta & = M_{22}^{-1}(s) \left(- M'_{22}(s) v_\theta(t) v_s + f_\theta(u) \right).
	\end{split}
\end{align}

Note that in terms of the trim conditions, we may write $ \dot{v}_s = T(s,v_\theta,u)-  \frac 12 M_{11}^{-1} M'_{11}(s) v^2_s $, where the last term vanishes on $\mathcal{T}$.

%

\section{Optimal control of systems with symmetry}\label{sec:generalforcing}

We start by formulating the Optimal Control Problem (OCP) subject to the first-order Euler-Lagrange system \eqref{eq:v_s}. To this end, consider a continuously differentiable stage cost $\ell:\mathbb{R}^3 \times \mathbb{R}^m \to \mathbb{R}$. Further, let an initial state $(s^0,v_s^0,\theta^0,v_\theta^0)\in \mathbb{R}^4$ and a time horizon $T>0$ be given. The OCP we will consider in the following is given by \\
\noindent \fbox{
	\begin{minipage}{.98 \textwidth}
		\begin{align}\label{eq:OCP_full}\tag{OCP}
			\begin{split}
				\min_{u \in L^\infty([0,T],\mathbb{R}^{m})}\quad &\int_0^T \ell (s(t),v_s(t),v_\theta (t),u(t))\,\mathrm{d}t\\
				\text{subject to the system dynamics}&~\eqref{eq:v_s}, 
				\nonumber \\
				(s\ v_s\ \theta\ v_\theta)(0) &= (s^0\ v_s^0\ \theta^0\ v_\theta^0).
			\end{split}
		\end{align}
\end{minipage}}

Our standing assumption on the stage cost is as follows.
\begin{assumption}\label{Ass:ell}
	The stage cost is independent of~$\theta$, i.e., $\frac{\partial \ell}{\partial \theta} = 0$ holds. 
\end{assumption}
This assumption reflects the underlying symmetry property, i.e., we do not penalize $\theta$ in the OCP.

We will now derive the first-order Necessary Conditions of Optimality (NCO) for \eqref{eq:OCP_full}. As the OCP does not involve terminal constraints, singular arcs can not occur, cf.~\cite[Rem.\ 6.9, p.\ 168]{Locatelli01}, which allows us to normalize (and hence omit) the multiplier of the stage cost in the following.
We consider the adjoints (co-states)  $\lambda=\left(\lambda_s,\lambda_{v_s},\lambda_\theta,\lambda_{v_\theta}\right)$, 
and the (optimal control) Hamiltonian of \eqref{eq:OCP_full},
\begin{align*}
	\mathcal{H}(&s,v_s,v_\theta,u,\lambda) \doteq    \ell(s,v_s,v_\theta,u) \\ 
	& + \begin{bmatrix}
		\lambda_s \\ \lambda_{v_s} \\ \lambda_\theta \\ \lambda_{v_\theta}
	\end{bmatrix}^\top \begin{bmatrix}
		v_s\\
		M^{-1}_{11}(s) \left( \frac 12 M'_{22}(s)  v_\theta^2 - \frac 12M'_{11}(s) v_s^2 -V'(s) + f_s(u) \right) \\
		v_\theta  \\
		M^{-1}_{22}(s) \left(- M'_{22}(s) v_\theta v_s + f_\theta(u) \right)
	\end{bmatrix}.
\end{align*}
Hence the adjoint equations read
\begin{subequations}\label{eq:OCP_full_NCO}
	\begin{align}\label{eq:adjoint_fullOCP}
		\dot{\lambda}(t) = & -\begin{bmatrix}
			0 & 1& 0& 0\\
			L_1(s) & -M^{-1}_{11}(s) M'_{11}(s) v_s & 0 & M^{-1} _{11}(s) M'_{22}(s) v_\theta & \\
			0 &  0 & 0 & 1\\
			L_2(s)& -M^{-1}_{22}(s) M'_{22}(s) v_\theta & 0 & - M^{-1}_{22}(s) M'_{22}(s) v_s 
		\end{bmatrix}^\top \begin{bmatrix}
			\lambda_s \\ \lambda_{v_s} \\ \lambda_\theta \\ \lambda_{v_\theta}
		\end{bmatrix} - \begin{bmatrix}
			\frac{\partial \ell}{\partial s} \\
			\frac{\partial \ell}{\partial v_s}\\
			0 \\
			\frac{\partial \ell}{\partial v_\theta}
		\end{bmatrix}
	\end{align}
	with $\lambda(T) = 0$, where $L_1(s)$ and $L_2(s)$ are defined by
	\begin{align*}
		L_1(s) &\doteq     M^{-1}_{11}(s)' \left( - \frac 12 M'_{11}(s) v_s^2 + \frac 12  M'_{22}(s) v_\theta^2 - V'(s) + f_{s}(u)\right) \\
		&\quad+ M^{-1}_{11}(s)\left( - \frac 12M''_{11}(s) v_s^2 + \frac 12 M''_{22}(s)  v_\theta^2 -V''(s) \right) ,\\
		& = \frac{\partial}{\partial s} \left( T(s,v_\theta,u)-  \frac 12 M_{11}^{-1} M'_{11}(s) v^2_s \right),\\
		L_2(s)&\doteq   -M^{-1}_{22}(s)' M'_{22}(s) v_\theta v_s - M^{-1}_{22}(s)  M''_{22}(s) v_\theta v_s  +  M_{22}^{-1}(s)' f_\theta(u).
	\end{align*}
	Observe that 
	\[
	M^{-1} _{11}(s) M'_{22}(s) v_\theta =  \frac{\partial}{\partial v_\theta}  T(s,v_\theta,u).
	\]
	Moreover, the gradient stationarity condition reads
	\begin{align}\label{eq:gradeq}
		0 =  \mathcal{H}_u =& 
		M^{-1}_{11}(s) f_s^\prime(u) \lambda_{v_s} + M^{-1}_{22}(s) f_\theta^\prime(u) \lambda_{v_\theta} + \frac{\partial \ell}{\partial u}\\
		=&    \frac{\partial}{\partial u}  T(s,v_\theta,u)\lambda_{v_s} + M^{-1}_{22}(s) f_\theta^\prime(u) \lambda_{v_\theta} + \frac{\partial \ell}{\partial u}. \nonumber
	\end{align}
\end{subequations}

In order to identify turnpike phenomena in optimal control of Lagrangian systems of type \eqref{eq:v_s}, we will study the optimal control problem when restricting it to the trim manifold $\mcl{T}$ via its necessary conditions of optimality.

\subsection{Optimal control on trim manifold $\mcl{T}$}\label{Subsection:OCtrimManifold}

We are interested in studying the output zeroing dynamics as introduced in Remark~\ref{rem:outputZeroDynamics} in an optimal control setting.
In the scalar case, the output zeroing condition \eqref{eq:T} can be rewritten as
\begin{align}\label{eq:OutputZeroingCondition}
	T(s,v_\theta,u) = M_{11}^{-1}(s) \left( \frac 12 M^\prime_{22}(s)v^2_\theta -  V'(s) + f_s(u)\right) =0
\end{align}
If \eqref{eq:OutputZeroingCondition} holds along a trajectory with shape variable~$s$ being at steady state, i.e., $v_s(t) = 0$ holds for all $t \geq 0$, the solution stays within the trim manifold (cf.\ Definition~\ref{rem:TrimManifold}).
The trajectory is not necessarily a trim, though, since the control need not be constant.

Restricting to output-zeroing dynamics leads to the following reduced optimal control problem on the trim manifold~$\mcl{T}$

\noindent \fbox{
	\begin{minipage}{.98 \textwidth}
		\begin{align}\label{eq:T-OptimizationProblem}\tag{$\mathcal{T}$-OCP}
			\begin{split} 
				\min_{\bar{u} \in L^\infty([0,T],\mathbb{R}^{m}),\,\bar{s} \in \mathbb{R}}\quad & \int_0^T \ell(\bar{s},0,\bar{v}_{\bar{\theta}},\bar{u})\,\mathrm{d}t \\
				\text{subject to } \dot{\bar{\theta}}&= \bar{v}_{\bar{\theta}},\\
				\dot{\bar{v}}_{\bar{\theta}} & = M_{22}^{-1}(\bar{s}) f_\theta(\bar{u}), \\
				(\bar{{{\theta}}}\ \bar{v}_{\bar{\theta}})(0) &=(\theta^0\ v_\theta^0),\\ 
				T(\bar{s},\bar{v}_{\bar{\theta}},\bar{u}) & = 0 \quad  \forall t \in [0,T], \text{ as in } \eqref{eq:OutputZeroingCondition}.
			\end{split}
		\end{align}
\end{minipage}}

$\quad$\\
Next, we derive the NCO of \eqref{eq:T-OptimizationProblem}. %
To this end, we define the (optimal control) Hamiltonian 
\begin{equation}\nonumber
	\mathcal{H}(\bar{v}_{\bar{\theta}},\bar{u},\bar{s},\bar{\lambda}_{\bar \theta},\bar{\lambda}_{\bar{v}_{\bar{\theta}}})\doteq    \ell(\bar{s},0,\bar{v}_{\bar{\theta}},\bar{u}) + \bar{\lambda}_{\bar{\theta}} \bar{v}_{\bar{\theta}} + \bar{\lambda}_{\bar{v}_{\bar{\theta}}} M_{22}^{-1}(\bar{s}) f_\theta(\bar{u})
\end{equation}
with the adjoints $\bar{\lambda} = (\bar{\lambda}_{\bar{\theta}}, \bar{\lambda}_{\bar{v}_{\bar{\theta}}})$. %
Since the output zeroing condition~\eqref{eq:OutputZeroingCondition} is not included in this Hamiltonian, we augment it by direct adjoining. %
This is also known as the Lagrange formalism, see, e.g.~\cite{Hartl95}. The resulting (optimal control) Lagrangian is given by
\begin{equation}\nonumber
	\mathcal{L}(\bar{v}_{\bar{\theta}},\bar{u},\bar{s},\bar{\lambda}_{\bar{\theta}},\bar{\lambda}_{\bar{v}_{\bar{\theta}}},\bar{\lambda}_{\mcl{T}}) \doteq \mathcal{H}(\bar{v}_{\bar{\theta}},\bar{u},\bar{s},\bar{\lambda}_{\bar{\theta}},\bar{\lambda}_{\bar{v}_{\bar{\theta}}}) + \bar{\lambda}_{\mcl{T}} T(\bar{s},\bar{v}_{\bar{\theta}},\bar{u})
\end{equation}
with the Lagrange multiplier $\bar{\lambda}_{\mcl{T}}$ associated with the output zeroing condition~\eqref{eq:OutputZeroingCondition}. %
Since the right-hand sides of the Hamiltonian~$\mathcal{H}$ and the Lagrangian are independent of~$\bar{\theta}$, we suppress~$\bar{\theta}$ in the lists of arguments. %
%
Stationarity of the Lagrangian, i.e.\ $\nabla_{(\bar{u},\bar{s})} \mcl{L} = 0$, yields the NCO of ~\eqref{eq:T-OptimizationProblem}:
\begin{subequations}\label{eq:steadystate}
	\begin{align}
		\dot{\bar{\lambda}}_{\bar{\theta}}  & = \phantom{-}0 \label{eq:steadystate1} \\
		\dot{\bar{\lambda}}_{\bar{v}_{\bar{\theta}}} & = - \frac{\partial \ell}{\partial \bar{v}_{\bar{\theta}}}
		\hspace*{3.25cm}- \frac{\partial}{\partial \bar{v}_{\bar{\theta}}} T(\bar{s},\bar{v}_{\bar{\theta}},\bar{u}) \bar{\lambda}_{\mcl{T}}  \label{eq:steadystate2} \\
		0 & = \phantom{-}\frac {\partial \ell}{\partial \bar{u}} + \frac{\partial}{\partial \bar{u}} f_{{\theta}}(\bar{u}) M_{22}^{-1}(\bar{s}) \bar{\lambda}_{\bar{v}_{\bar{\theta}}} 
		+  \frac{\partial}{\partial \bar{u}} T(\bar{s},\bar{v}_{\bar{\theta}},\bar{u}) \bar{\lambda}_{\mcl{T}} \label{eq:steadystate3} \\		
		\begin{split}  \label{eq:steadystate4}
			0 & =\phantom{-} \frac {\partial \ell}{\partial \bar{s}} + \bar{\lambda}_{\bar{v}_{\bar{\theta}}} M^{-1}_{22}(\bar{s})' f_{{\theta}}(\bar{u}) 
			\hspace*{0.4cm}+  \frac{\partial}{\partial \bar{s}} T(\bar{s},\bar{v}_{\bar{\theta}},\bar{u})  \bar{\lambda}_{\mcl{T}}
		\end{split} \\
		T(\bar{s},\bar{v}_{\bar{\theta}},\bar{u}) &= \phantom{-}M_{11}^{-1}(s) \left( \frac12M_{22}'(\bar{s})\bar{v}_{\bar{\theta}}^2 - V'(\bar{s}) + f_s(\bar{u})\right) = 0, \label{eq:steadystate5} \\
		\dot{\bar{\theta}} &= \phantom{-}\bar{v}_{\bar{\theta}} \label{eq:steadystate6} \\
		\dot{\bar{v}}_{\bar{\theta}} &= \phantom{-}M_{22}(\bar{s})^{-1} f_\theta(\bar{u}) \label{eq:steadystate7}
	\end{align}
\end{subequations}
for a.e.\ $t\in [0,T]$, $\bar{\lambda}(T)=0$ for all adjoint states and $(\bar{\theta},\bar{v}_{\bar{\theta}})(0)=(\theta^0,v_\theta^0)$.
Then, in view of the terminal condition $\bar{\lambda}_{\bar{\theta}}(T)=0$ and the associated adjoint equation \eqref{eq:steadystate1}, %
we directly obtain $\bar{\lambda}_{\bar{\theta}} \equiv 0$ on~$[0,T]$. %

\subsection{Relation of \eqref{eq:OCP_full} and \eqref{eq:T-OptimizationProblem} via their NCO} 

The main result in this section is the following connection between the NCO of the full and the reduced optimal control problems \eqref{eq:OCP_full} and \eqref{eq:T-OptimizationProblem}.
\begin{proposition}[Correspondence of NCOs]\label{prop:NCOmatch}
	Consider \eqref{eq:OCP_full} and its reduced counterpart \eqref{eq:T-OptimizationProblem} for a Lagrangian system of type \eqref{eq:v_s}. %
	Suppose that Assumptions~\ref{Ass:BlockDiagonalForcing} and~\ref{Ass:ell} hold. %
	If an optimal solution and the corresponding Lagrange multiplier for \eqref{eq:OCP_full} satisfy $\dot{s}^\star(t) = v_s^\star(t) = 0$ and $\dot{\lambda}^\star_s(t) = 0$  for $t \in [t_1,t_2]\subseteq [0,T]$,
	then 
	they also satisfy the dynamics of the first-order NCOs of ~\eqref{eq:T-OptimizationProblem} for all $t\in [t_1,t_2]$ in the sense of Table~\ref{tab:ident1}. 
	
	Conversely, an optimal solution and the corresponding Lagrange multiplier of~\eqref{eq:T-OptimizationProblem} satisfy the dynamics of the first-order NCOs of \eqref{eq:OCP_full} in the sense of Table~\ref{tab:ident1} and setting $v_s(t) = 0$ and ${\lambda}_s(t) = 0$.
\end{proposition}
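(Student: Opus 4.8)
The plan is to exhibit a dictionary between the variables of \eqref{eq:OCP_full} and those of \eqref{eq:T-OptimizationProblem}, as recorded in Table~\ref{tab:ident1}, and then to check that, under the restriction $v_s\equiv 0$, every row of the full necessary conditions \eqref{eq:OCP_full_NCO} coincides with one of the reduced conditions \eqref{eq:steadystate}. The natural dictionary is
\[
s \leftrightarrow \bar s,\quad \theta \leftrightarrow \bar\theta,\quad v_\theta \leftrightarrow \bar v_{\bar\theta},\quad u \leftrightarrow \bar u,\quad \lambda_\theta \leftrightarrow \bar\lambda_{\bar\theta},\quad \lambda_{v_\theta} \leftrightarrow \bar\lambda_{\bar v_{\bar\theta}},\quad \lambda_{v_s} \leftrightarrow \bar\lambda_{\mcl{T}},
\]
the key point being that the co-state $\lambda_{v_s}$ conjugate to the $v_s$-dynamics of \eqref{eq:v_s} plays, after reduction, the role of the multiplier $\bar\lambda_{\mcl{T}}$ attached to the output-zeroing constraint \eqref{eq:OutputZeroingCondition}. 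Both implications then reduce to substituting $v_s=0$ into \eqref{eq:adjoint_fullOCP}--\eqref{eq:gradeq} and reading off the correspondence term by term.

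Before matching I would record three elementary simplifications valid on $\{v_s=0\}$, which carry the entire computational content. First, the term $\tfrac12 M_{11}^{-1}M_{11}'v_s^2$ in $L_1(s)$ carries a factor $v_s^2$, so its $s$-derivative vanishes at $v_s=0$ and $L_1=\partial T/\partial s$ there; likewise the first two summands of $L_2(s)$ carry a factor $v_s$, whence $L_2=(M_{22}^{-1})'f_\theta(u)$ at $v_s=0$. Second, I would use the identity $\partial T/\partial v_\theta = M_{11}^{-1}M_{22}'v_\theta$ already noted after \eqref{eq:adjoint_fullOCP}. Third, the third row of \eqref{eq:adjoint_fullOCP} gives $\dot\lambda_\theta\equiv 0$ which, together with $\lambda_\theta(T)=0$, yields $\lambda_\theta\equiv 0$ on all of $[0,T]$, mirroring the relation $\bar\lambda_{\bar\theta}\equiv 0$ established after \eqref{eq:steadystate}.

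For the forward implication I would substitute $v_s=0$ and $\lambda_\theta=0$ throughout. The $\theta$- and $v_\theta$-components of the dynamics of \eqref{eq:v_s} become \eqref{eq:steadystate6}--\eqref{eq:steadystate7}, while $\dot v_s=0$ (as $v_s$ is constant) collapses the $v_s$-equation to the constraint $T=0$, i.e.\ \eqref{eq:steadystate5}. On the adjoint side, the hypothesis $\dot\lambda_s=0$ turns the first row of \eqref{eq:adjoint_fullOCP} into $0=\partial\ell/\partial s+(\partial T/\partial s)\lambda_{v_s}+(M_{22}^{-1})'f_\theta\,\lambda_{v_\theta}$, which is exactly \eqref{eq:steadystate4}; the fourth row becomes \eqref{eq:steadystate2}; the stationarity \eqref{eq:gradeq} becomes \eqref{eq:steadystate3}; and $\dot\lambda_\theta=0$ is \eqref{eq:steadystate1}. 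The only row of \eqref{eq:adjoint_fullOCP} without a reduced counterpart is the $\lambda_{v_s}$-equation, which at $v_s=0$ reads $\dot\lambda_{v_s}=-\lambda_s+M_{22}^{-1}M_{22}'v_\theta\lambda_{v_\theta}-\partial\ell/\partial v_s$ and merely fixes the constant value of $\lambda_s$; it is not needed to recover \eqref{eq:steadystate}.

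For the converse I would run the same identities in reverse: starting from a solution of \eqref{eq:steadystate}, I set $s=\bar s$, $v_s=0$, $\lambda_s=0$ and apply the dictionary, so that \eqref{eq:steadystate5}--\eqref{eq:steadystate7} reproduce the dynamics of \eqref{eq:v_s} on $\{v_s=0\}$, \eqref{eq:steadystate4} yields $\dot\lambda_s=0$ (consistent with $\lambda_s=0$), \eqref{eq:steadystate2} yields the $\lambda_{v_\theta}$-row, and \eqref{eq:steadystate3} yields \eqref{eq:gradeq}. I expect the main obstacle to be precisely the leftover $\lambda_{v_s}$-equation: unlike the dynamic co-state $\lambda_{v_s}$, the reduced quantity $\bar\lambda_{\mcl{T}}$ is an \emph{algebraic} Lagrange multiplier with no prescribed evolution, so one must argue that the choice $\lambda_s=0$ turns this row into a mere definition of $\dot\lambda_{v_s}=\dot{\bar\lambda}_{\mcl{T}}$ rather than an independent constraint. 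A related subtlety, worth flagging, is that $\bar s$ enters \eqref{eq:T-OptimizationProblem} as a finite-dimensional (constant) decision variable, so the pointwise reading of \eqref{eq:steadystate4} invoked in the matching must be justified; this is where I would spend the most care.
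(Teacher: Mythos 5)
Your proposal is correct and follows essentially the same route as the paper's proof: substitute $v_s\equiv 0$ (and $\lambda_\theta\equiv 0$ from the transversality condition) into \eqref{eq:OCP_full_NCO}, use the simplifications $L_1=\partial T/\partial s$ and $L_2=(M_{22}^{-1})'f_\theta(u)$ on $\{v_s=0\}$, and match term by term under the identification $\bar\lambda_{\mcl{T}}=\lambda_{v_s}$, $\bar\lambda_{\bar v_{\bar\theta}}=\lambda_{v_\theta}$, with the leftover $\dot\lambda_{v_s}$-row set aside exactly as the paper does (it has no counterpart because $\bar\lambda_{\mcl{T}}$ is an algebraic multiplier, not an adjoint). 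The two subtleties you flag at the end --- the status of the leftover row in the converse direction and the pointwise reading of \eqref{eq:steadystate4} for the constant decision variable $\bar s$ --- are real and are glossed over in the paper as well, which hedges them via the phrase ``in the sense of Table~\ref{tab:ident1}.''
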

\begin{proof}
	We start with analyzing the NCO \eqref{eq:OCP_full_NCO}.
	With $\dot{s}^\star(t) = v_s^\star(t) = 0$ for an optimal solution of~\eqref{eq:OCP_full}, the primal dynamics \eqref{eq:v_s} are equivalent to \eqref{eq:steadystate5}-\eqref{eq:steadystate7}, i.e.,\ the primal dynamics of the NCO~\eqref{eq:T-OptimizationProblem} plus the output-zeroing condition $T(s,v_\theta,u) =0$.

	\begin{table}[!t]
		\centering
		\caption{Identification of the variables occuring in the NCOs of the reduced problem \eqref{eq:T-OptimizationProblem} and the full problem  \eqref{eq:OCP_full}. \label{tab:correspondence}}
		\begin{tabular}{ l l l }
			NCOs of \eqref{eq:T-OptimizationProblem}& NCO of \eqref{eq:OCP_full}  & Identification\\\hline \\[-.2cm]
			$-$ & $\lambda_s$& $0 = \dot{\lambda}_s$ \\ 
			$\bar{\lambda}_{\mcl{T}}$ & $\lambda_{v_s}$& $\bar{\lambda}_{\mcl{T}} = \lambda_{v_s}$\\  
			$\bar{\lambda}_{\bar\theta} $ & $\lambda_\theta$& $ \bar{\lambda}_{\bar\theta}  =\lambda_\theta =  0$ (since $\frac{\partial\ell}{\partial \theta} =0$)  \\
			$\bar{\lambda}_{\bar v_{\bar\theta}}$ & $\lambda_{v_\theta}$& $\bar{\lambda}_{\bar v_{\bar\theta}} =\lambda_{v_\theta}$\\
			$\bar{s}$ & $s$& $\dot{\bar{s}}=\dot{s}=0$ (since $s = const.$)\\
			$-$ & $v_s$ & $0 = v_s$\\
			$\bar{\theta}$ & $\theta$ & $\bar{\theta}=\theta$\\
			$\bar{v}_{\bar{\theta}}$ & $v_\theta$ & $\bar{v}_{\bar{\theta}} =  v_\theta$  (since $v_s=0$)\\
			$\bar{u}$&$u$&$\bar{u}=u$
		\end{tabular}
		\label{tab:ident1}
	\end{table}

	Observe that in the adjoint equations \eqref{eq:adjoint_fullOCP}, we have $\dot\lambda_\theta = 0$.
	Thus, with $\lambda_\theta(T) = 0$, we have $\lambda_\theta \equiv 0$.
	Applying the trim manifold constraints $s \equiv const.$ and  $v_s \equiv 0$ to the NCO~\eqref{eq:OCP_full_NCO} yields, 	
	suppressing the time argument for all functions,
	\begin{subequations}
		\label{eq:dynamicsys}
		\begin{align}
			\label{eq:dynamicsys1}
			\dot{\lambda}_s&= - \frac{\partial}{\partial s}T(s,v_\theta,u) \lambda_{v_s} - M_{22}^{-1}(s)'f_\theta(u)  \lambda_{v_\theta}-	\frac{\partial \ell}{\partial s} \\
			\label{eq:dynamicsys2}
			\dot{\lambda}_{v_s}&=	-\lambda_s + M_{22}^{-1}(s) M'_{22}(s)  v_\theta \lambda_{v_\theta} -\frac{\partial \ell}{\partial v_s}\\
			\label{eq:dynamicsys4}
			\dot{\lambda}_{v_\theta} &= - M_{11}(s)^{-1}  M'_{22}(s)v_\theta \lambda_{v_s}- \frac{\partial \ell}{\partial v_\theta}\\
			\label{eq:dynamicsys5}
			0&=
			\phantom{-}\frac{\partial}{\partial u}  T(s,v_\theta,u)\lambda_{v_s} + M^{-1}_{22}(s) f_\theta^\prime(u) \lambda_{v_\theta} + \frac{\partial \ell}{\partial u} 
		\end{align}
	\end{subequations}
	with
	\begin{align*}
		\frac{\partial}{\partial s}T(s,v_\theta,u)  = & M_{11}^{-1}(s)'\left(\frac12M'_{22}(s) v_\theta^2 - V'(s)+f_s(u)\right) \\
		& +M^{-1}_{11}(s) \left(\frac12M''_{22}(s)v_\theta^2-V''(s)\right).
	\end{align*}
	
	If $\bar{\lambda}_{\bar{v}_{\bar{\theta}}} = \lambda_{v_\theta}$ and
	$\bar{\lambda}_{\mathcal{T}}= \lambda_{v_s}$, then the gradient stationary condition \eqref{eq:dynamicsys5} is equivalent to \eqref{eq:steadystate3}.
	Then, 	with $\bar{\lambda}_{\mathcal{T}}=\lambda_{v_s}$ we see that \eqref{eq:dynamicsys4} coincides with \eqref{eq:steadystate2}.
	Further, if $\dot{\lambda}_s = 0$, \eqref{eq:dynamicsys1} is equivalent to \eqref{eq:steadystate4}, where we use that the primal variables of the full OCP are constrained on $\mathcal{T}$, i.e.,\ $T(s,v_\theta,u) =0$.
	Finally, \eqref{eq:dynamicsys2} does not have a counterpart in the NCOs \eqref{eq:steadystate} of the reduced system, since $\bar{\lambda}_{\mathcal{T}}$ is not an adjoint variable but a Lagrangian multiplier only.
	Nevertheless, the identifications show that given a solution of the NCO of \eqref{eq:T-OptimizationProblem}, this solution also satisfies the NCO of \eqref{eq:OCP_full} with $\dot{s}^\star(t) = v_s^\star(t) = \dot{\lambda}^\star_s(t) = 0$ and vice versa.
	Table~\ref{tab:correspondence} summarizes the correspondence of NCO variables for \eqref{eq:OCP_full} and \eqref{eq:T-OptimizationProblem}.
\end{proof}


The result shows that one may commutate applying the output-zeroing condition~\eqref{eq:OutputZeroingCondition} in~\eqref{eq:OCP_full} with applying the necessary conditions of optimality as first considering~\eqref{eq:OutputZeroingCondition} yields \eqref{eq:T-OptimizationProblem}, cf. Figure~\ref{fig:sketch}. %
While at first glance this result seems to be of purely technical nature, it is of interest of its own in context of turnpike analysis of OCPs. %
Recall that the main structure exploited in usual turnpike analysis is that %
KKT conditions for an optimal steady state of a system w.r.t.\ some stage cost~$\ell$ coincide with the steady-state conditions of the optimality system, %
see, e.g.,~\cite{Trelat15a,kit:zanon18a,tudo:faulwasser20h,Gruene2019}. %


\subsection{Optimal control on trim manifold $\mcl{T}$ with orthogonal forcing ($f_\theta\equiv0$)}\label{sec:trim_tp_general}

Whereas in the last section we allowed for a general forcing term $f(u) = \left( f_s(u),f_\theta(u)\right)^\top$ we will now consider particular forcings that preserve the structural symmetry of the system. As discussed in Remark~\ref{rem:orthForcing}, this means that $f_\theta\equiv0$ such that the forcing only acts orthogonal to the space spanned by symmetry variables, or, in other words, forcing is only allowed for the shape space variables.
%

We are interested in how optimal trims look like, i.e., triples ($s,v_\theta,u$) satisfying $T(s,v_\theta,u) =0$ (recall \eqref{eq:OutputZeroingCondition}) that minimize the running cost. Here we assume now that $\dot{s}\equiv v_s\equiv0$ as this is required for a trim according to Definition~\ref{def:Trims}.

Due to $f_\theta\equiv0$ and $v_s=0$ and as $\theta$ does occur neither in the cost functional nor on the right-hand side of the dynamics, we consider the steady state optimization problem\\

\noindent \fbox{
	\begin{minipage}{.98 \textwidth}
		\begin{align}\label{eq:EL_OCP_trim}\tag{SOP}
			\begin{split}
				\min_{(\bar{s},\bar{v}_\theta,\bar{u})\in \mathbb{R}^3} &\quad \ell(\bar{s},0,\bar{v}_\theta,\bar{u})\\
				\text{ subject to }
				T(\bar{s},\bar{v}_\theta,\bar{u}) & = 0 \text{ (as in \eqref{eq:OutputZeroingCondition})}.
			\end{split}
		\end{align}
\end{minipage}}

$\quad$\\
This problem is a particular version of \eqref{eq:T-OptimizationProblem} with $f_\theta\equiv0$.

To derive NCO, we define the Lagrangian with an adjoint state $\bar{\lambda}\in \mathbb{R}$
\begin{align}
	\mathcal{L}(\bar{s},\bar{v}_\theta,\bar{u},\bar{\lambda}) = \ell(\bar{s},0,\bar{v}_\theta,\bar{u}) + \bar{\lambda} T(\bar{s},\bar{v}_\theta,\bar{u})\label{eq:augLss}
\end{align}
and compute the stationarity conditions, omitting the argument $\bar{s}$ of the mass matrices and the potential:

\begin{subequations}
	\label{eq:sspL}
	\begin{align}
		0&=\frac{\partial \mathcal{L}}{\partial \bar{s}} = \,\frac{\partial \ell}{\partial \bar{s}} \hspace*{0.2cm}+ \bar{\lambda} \frac{\partial}{\partial \bar{s}} T(\bar{s},\bar{v}_\theta,\bar{u}) \label{eq:sspL1}\\
		0&=\frac{\partial \mathcal{L}}{\partial \bar{v}_\theta} = \frac{\partial \ell}{\partial \bar{v}_\theta} + \bar{\lambda} \frac{\partial}{\partial \bar{v}_\theta} T(\bar{s},\bar{v}_\theta,\bar{u}) \label{eq:sspL2}\\
		0&=\frac{\partial\mathcal{L}}{\partial {\bar{u}}} = \frac{\partial \ell}{\partial \bar{u}} \hspace*{0.2cm}+ \bar{\lambda} \frac{\partial}{\partial \bar{u}} T(\bar{s},\bar{v}_\theta,\bar{u})  \label{eq:sspL3}\\
		0&= \frac{\partial\mathcal{L}}{\partial {\bar{\lambda}}} = T(\bar{s},\bar{v}_\theta,\bar{u}) \label{eq:sspL4}.
	\end{align}

	%

\end{subequations}

\begin{proposition}[Correspondence of NCOs (cont'd)] \label{prop:NCOmatch2}
	Consider \eqref{eq:OCP_full} and its reduced counterpart \eqref{eq:EL_OCP_trim} for a Lagrangian system of type \eqref{eq:v_s} with $f_\theta \equiv 0$.
	Suppose that Assumptions~\ref{Ass:BlockDiagonalForcing} and~\ref{Ass:ell} hold.
	If an optimal solution and the corresponding Lagrange multiplier for \eqref{eq:OCP_full} satisfy $\dot{s}^*(t) =v^*_s(t) = 0$, $\dot{\lambda}^*_s(t) = 0$ and $\dot{\lambda}^*_{v_\theta}(t)=0$ for $t\in [t_1,t_2]\subseteq[0,T]$, then they also satisfy the dynamics of the first order NCOs of \eqref{eq:EL_OCP_trim} on $[t_1,t_2]$ when identifying $\bar{s}$ with $s^*$, $\bar{v}_\theta$ with $v_\theta^*$, $\bar{u}$ with $u^*$ and $\bar{\lambda}$ with $\lambda_{v_s}^*$. In particular, $v^*_\theta(t) = const.$, $\theta^*(t)$ is linear and $\lambda^*_{v_s}(t)=const$ for all $t\in [t_1,t_2]$.
\end{proposition}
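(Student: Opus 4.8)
The plan is to build directly on Proposition~\ref{prop:NCOmatch}, whose proof already reduced the first-order NCO of \eqref{eq:OCP_full} on the trim manifold (i.e.\ under $\dot{s}^\star = v_s^\star = 0$) to the system \eqref{eq:dynamicsys}. First I would specialize that system to orthogonal forcing by setting $f_\theta \equiv 0$: this makes the $f_\theta$-terms in \eqref{eq:dynamicsys1} and in the gradient condition \eqref{eq:dynamicsys5} vanish, and, via the primal dynamics \eqref{eq:v_s}, it makes $\dot{v}_\theta = M_{22}^{-1}(s)(-M_{22}'(s)v_\theta v_s + f_\theta(u))$ vanish once $v_s^\star = 0$. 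I would then impose the two additional stationarity hypotheses $\dot{\lambda}_s^\star = 0$ and $\dot{\lambda}_{v_\theta}^\star = 0$ and read off the static NCO \eqref{eq:sspL} term by term.

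The matching itself is a direct identification under $\bar{\lambda} = \lambda_{v_s}^\star$, together with $\bar{s} = s^\star$, $\bar{v}_\theta = v_\theta^\star$, $\bar{u} = u^\star$ inherited from Proposition~\ref{prop:NCOmatch}. Concretely: with $\dot{\lambda}_s^\star = 0$, equation \eqref{eq:dynamicsys1} becomes $0 = \partial_s\ell + \lambda_{v_s}\,\partial_s T$, which is \eqref{eq:sspL1}; with $\dot{\lambda}_{v_\theta}^\star = 0$, equation \eqref{eq:dynamicsys4} becomes $0 = \partial_{v_\theta}\ell + \lambda_{v_s}\,\partial_{v_\theta}T$, which is \eqref{eq:sspL2}, where I use the identity $M_{11}^{-1}(s)M_{22}'(s)v_\theta = \partial_{v_\theta}T$ noted after \eqref{eq:adjoint_fullOCP}; the gradient condition \eqref{eq:dynamicsys5} with $f_\theta \equiv 0$ is exactly \eqref{eq:sspL3}; and the output-zeroing condition \eqref{eq:OutputZeroingCondition} is \eqref{eq:sspL4}. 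As in the proof of Proposition~\ref{prop:NCOmatch}, the remaining adjoint equation \eqref{eq:dynamicsys2} has no counterpart, precisely because $\lambda_{v_s}$ plays the role of the static multiplier $\bar{\lambda}$ of \eqref{eq:EL_OCP_trim} rather than of an adjoint state. This establishes the first assertion.

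For the supplementary claims I would argue as follows. Constancy of $v_\theta^\star$ is immediate from $\dot{v}_\theta^\star = 0$ (shown above), and linearity of $\theta^\star$ then follows from $\dot{\theta}^\star = v_\theta^\star = \mathrm{const}$. For $\lambda_{v_s}^\star = \mathrm{const}$ I would start from the already-derived relation \eqref{eq:sspL2}, i.e.\ $\lambda_{v_s}^\star\,M_{11}^{-1}(s^\star)M_{22}'(s^\star)v_\theta^\star = -\partial_{v_\theta}\ell(s^\star,0,v_\theta^\star,u^\star)$: since $s^\star$ and $v_\theta^\star$ are constant, the coefficient on the left is a fixed number, so $\lambda_{v_s}^\star$ is pinned down by the value of $\partial_{v_\theta}\ell$ along the solution.

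The main obstacle is exactly this last step. The algebraic collapse of \eqref{eq:dynamicsys} onto \eqref{eq:sspL} is routine bookkeeping, but the constancy of $\lambda_{v_s}^\star$ is genuine: \emph{a priori} $\lambda_{v_s}^\star$ is only a time-varying adjoint, and identifying it with the single static multiplier $\bar{\lambda}$ requires ruling out time dependence. The clean way is to observe that, along the trim, $s^\star$ and $v_\theta^\star$ are constant, so the whole system \eqref{eq:sspL} is autonomous; hence the pair $(u^\star(t),\lambda_{v_s}^\star(t))$ is confined to a fixed solution set, and since the adjoint $\lambda_{v_s}^\star$ is (absolutely) continuous it must be constant, provided that set is non-degenerate in the multiplier component. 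Equivalently, \eqref{eq:sspL4} forces $f_s(u^\star)$ to be constant, so under injectivity of the forcing $f_s$ the control $u^\star$ is itself constant and constancy of $\lambda_{v_s}^\star$ follows directly from \eqref{eq:sspL1} or \eqref{eq:sspL2}. I would flag this regularity requirement explicitly, since it is the one place where the statement needs more than the purely mechanical matching of the two optimality systems.
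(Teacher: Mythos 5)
Your proposal is correct and follows exactly the route the paper takes: the published proof consists of a single sentence deferring to the proof of Proposition~\ref{prop:NCOmatch} with $f_\theta\equiv 0$, and your term-by-term collapse of \eqref{eq:dynamicsys} onto \eqref{eq:sspL} under the extra hypotheses $\dot{\lambda}_s^\star=\dot{\lambda}_{v_\theta}^\star=0$, together with $\dot{v}_\theta^\star=0$ from $v_s^\star=0$ and $f_\theta\equiv 0$, is precisely what that sentence leaves implicit. Your further observation that $\lambda_{v_s}^\star=\mathrm{const}$ does not follow from the algebraic matching alone but needs a regularity condition (e.g.\ injectivity of $f_s$, which via $T(s^\star,v_\theta^\star,u^\star)=0$ forces $u^\star$ to be constant, whence constancy of $\lambda_{v_s}^\star$ follows from \eqref{eq:sspL2} or \eqref{eq:sspL3}) is well taken: the paper's one-line proof does not address this point, so the gap you flag is in the paper's argument, not in yours.
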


\begin{proof}
	The proof follows the same structure as the proof of Proposition~\ref{prop:NCOmatch} setting $f_\theta =0$.
\end{proof}
The results of Propositions~\ref{prop:NCOmatch} and~\ref{prop:NCOmatch2} are summarized in Figure~\ref{fig:sketch}. In essence, these results extend the usual turnpike analysis~\cite{Carlson91,Trelat15a,kit:zanon18a}, in the sense that the turnpike now corresponds to the solutions living on the trim manifold $\mathcal{T}$. Moreover, note that even on the level of \eqref{eq:EL_OCP_trim} one considers situation more general than classical steady-state turnpikes. The reason is that, while \eqref{eq:EL_OCP_trim} is a steady-state problem, its optimal solution characterizes a trim, which in turn corresponds to a continuum of dynamic trajectories. 
\begin{figure}
	\includegraphics[width=0.99\textwidth]{./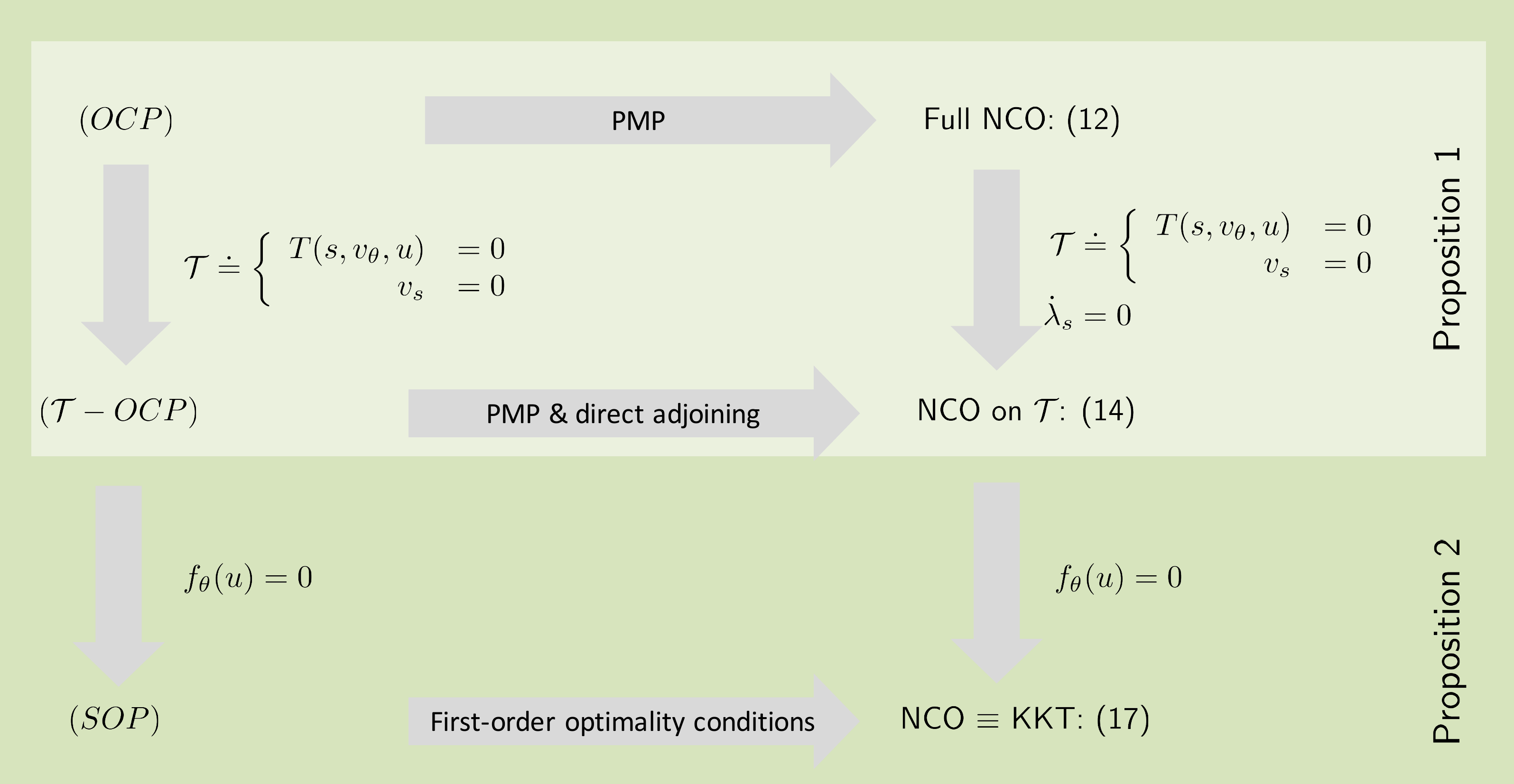}
	\caption{Illustration of Propositions \ref{prop:NCOmatch} and~\ref{prop:NCOmatch2}. \label{fig:sketch}}
\end{figure}

\begin{remark}[Trims, velocity turnpikes and the trim manifold]
	In~\cite{FaulFlas19,tudo:faulwasser20e} we proposed the concept of velocity turnpikes to establish a link between symmetries, trims and turnpike properties in OCPs. In essence, velocity turnpikes are a special case of the trim manifold approach considered in the present paper. 
	To establish velocity turnpikes we considered systems
	\begin{align*}
		\dot{q} & = v\\
		\dot{v} & = f(v,u),
	\end{align*}
	i.e.\ the dynamics were assumed to be invariant of all configuration velocities.
	Geometrically speaking, this system class corresponds to mechanical systems on Lie groups, i.e.\ all variables are cyclic.
	Since the stage cost $\ell$ was not allowed to depend on the cyclic variables either, one 
	can rely on established turnpike concepts which include all state variables except the position states.
	Based on the results of this paper, the next generalization is to remove the assumption on cyclic variables and to consider (mechanical) systems with general symmetries defined by left-actions $\Psi: \mathcal{G} \times TQ \to TQ$.
\end{remark}

While the analysis so far leveraged---at least partially---symmetries and trims, we turn to a more general setting of manifold turnpikes.

\section{Manifold turnpikes and optimal operation on a manifold}

The basis of our subsequent developments is the following extension of~\eqref{eq:OCP_full}

\noindent \fbox{
	\begin{minipage}{.98 \textwidth}
		\begin{align}
			\label{eq:OCP_full_con}\tag{OCP$_\psi$}
			\begin{split}
				\min_{u \in L^\infty([0,T],\mathbb{R}^{m})}\quad &\int_0^T \ell (s(t),v_s(t),v_\theta (t),u(t))\,\mathrm{d}t\\
				\text{subject to the system dynamics}&~\eqref{eq:v_s} 
				\nonumber \\
				(s\ v_s\ \theta\ v_\theta)(0) &= (s^0\ v_s^0\ \theta^0\ v_\theta^0),\\
				\psi\Big((s\ v_s\ \theta\ v_\theta)(T)\Big) &\leq 0,	
			\end{split}
		\end{align}
\end{minipage}}
$\quad$\\
where $\psi:\mathbb{R}^4 \to \mathbb{R}^{n_\psi}$ specifies a target set. For brevity, we use the shorthand 
\[x \doteq  \begin{bmatrix}s& v_s& \theta& v_\theta \end{bmatrix}^\top\]
for the state vector. 
Then, defining the terminal set $\Psi \doteq    \{x\in \mathbb{R}^4 \,|\, \psi(x) \leq 0\}$, allows to state the terminal constraint in~\eqref{eq:OCP_full_con} as $x(T) \in \Psi$.


\subsection{Sufficient conditions for manifold turnpikes}\label{Subsec:SC}
Next we introduce a general framework for the concepts of dissipativity, the turnpike property and optimal operation with respect to a manifold that is well-suited to our particular application with a trim manifold. 
Before we proceed, we require the following notation: A continuous function $\alpha: \mathbb{R}_{\geq 0} \rightarrow \mathbb{R}_{\geq 0}$ is said to be a class~$\mathcal{K}$-function if $\alpha(0) = 0$ and $\alpha$ is monotonically increasing, see also~\cite{Sont98,Kell14} for further details and explanations w.r.t.\ such comparison functions.

Next, we want to show that \eqref{eq:OCP_full_con} exhibits a measure turnpike property with respect to a manifold. To this end, we require both strict dissipativity as well as cost controllability whereas the latter can be replaced by (potentially) weaker assumptions referring to reachability. 
Note that we state the following definitions and Theorem~\ref{thm:ManifoldTurnpike} such that they are directly applicable to OCPs with more general system dynamics evolving in~$\mathbb{R}^{n}$, $n \in \mathbb{N}$, inputs $u \in \mathbb{R}^{m}$, and stage costs $\ell:\mathbb{R}^{n}\times \mathbb{R}^{m} \to\mathbb{R}$.


\begin{definition}[Strict dissipativity w.r.t.\ a manifold]\label{def:StrictDissipativity}
	\eqref{eq:OCP_full_con} %
	is said to be strictly dissipative w.r.t.\ a manifold $\mathcal{T}\subset\mathbb{R}^{n}$ on the set~$\mathbb{X} \subseteq \mathbb{R}^{n}$ %
	if there exists a storage function $S:\mathbb{R}^{n}\to \mathbb{R}_{\geq 0}$, that is bounded on compact sets, and a $\mathcal{K}$-function~$\alpha$ such that %
	\begin{align}\label{eq:sDI}
		\begin{split}
			S(x^\star (T))- S(x^0) \leq \int_0^T    \ell (x^\star(t),u^\star(t)) & -\alpha(\dist(x^\star (t),\mathcal{T}))\,\mathrm{d}t \quad\forall\,T \geq 0
		\end{split}
	\end{align}
	for all optimal controls $u^\star \in L^{\infty}([0,T],\mathbb{R}^{m})$ and %
	associated state trajectories $x^\star \in W^{1,\infty}([0,T],\mathbb{R}^{n})$ with initial values $x^0 \in \mathbb{X}$.
\end{definition}
We emphasize that in this defintion strict dissipativity is a property of \eqref{eq:OCP_full_con} which is parametric in $x^0$ and~$T$. Alternatively, it can be defined as a property of the underlying dynamical system, see the foundational works of Willems~\cite{Willems71a,Willems72a,Willems72b} or more recent treatments in~\cite{Willems07a,Moylan14a}. In addition to dissipativity, we further require certain reachability properties. Here, we state the results based on cost controllability on a compact set, see~\cite{Coro20} for details. Definition~\ref{def:CostControllability} extends the previously proposed concepts of cost controllability introduced in \cite{TunaMessinaTeel2006,Grun09,GrunPann10} for discrete-time systems and~\cite{ReblAllg12} in the continuous-time setting, see~\cite{WortRebl14,WortRebl15} for connections between continuous- and discrete-time systems and~\cite{Wort11} for a thorough comparison of cost controllability and its precursors.

In the following we will denote by $x(t;x^0,u)$ a state trajectory evolving from the dynamics \eqref{eq:v_s} with control $u$ and initial state $x^0$, where with slight abuse of notation we stick to the state space being $\mathbb{R}^{n}$ instead of $\mathbb{R}^4$.

\begin{definition}[Cost controllability on a set~$\mathbb{X}$]\label{def:CostControllability}
	Let a set~$\mathbb{X} \subseteq \mathbb{R}^{n}$ be given. Then, defining 
	\[\ell^\star(x^0) \doteq \inf_{u \in \mathbb{R}^{m}} \ell(x^0,\hat{u}),
	\]
	\eqref{eq:OCP_full_con}
	with (optimal) value function~$V_T: \mathbb{R}^{n} \rightarrow \mathbb{R} \cup \{-\infty,\infty\}$ defined by
	\begin{equation}\nonumber
		V_T(x^0) \doteq    \inf_{u \in L^\infty([0,T],\mathbb{R}^{m})} \int_0^T \ell(x(t;x^0,u),u(t))\,\text{d}t
	\end{equation}%
	is called cost controllable %
	if there exists a bounded and increasing growth function $B_{\mathbb{X}}:\mathbb{R}_{\geq 0} \to \mathbb{R}_{\geq 0}$ such that
	\begin{align}\label{eq:costcon}
		V_T(x^0) \leq B_{\mathbb{X}}(T) \cdot \ell^\star(x^0) \qquad\forall\,x^0 \in \mathbb{X}, T \geq 0.
	\end{align}
\end{definition}
The strict dissipativity inequality \eqref{eq:sDI} implies that the cost of optimal trajectories is bounded from below by the distance to the trim manifold. In particular, this means that cost controllability in the sense of \eqref{eq:costcon} implies that the manifold $\mathcal{T}$ can be approached from any initial value $x_0\in \mathbb{X}$. 
For the sake of clarity of presentation, we assumed boundedness of the growth function~$B_{\mathbb{X}}$. However, we note that often in the context of cost controllability, the bound $\limsup_{T\rightarrow \infty} B_{\mathbb{X}}(T)/T < 1$ suffices, see~\cite{MullWort17} for details. 
\begin{definition}[Manifold turnpike property]
	\eqref{eq:OCP_full_con}
	is said to have the manifold turnpike property w.r.t.\ a manifold~$\mathcal{T}$ on a set $\mathbb{X} \subseteq \mathbb{R}^{n}$ %
	if, for all compact sets $K \subseteq \mathbb{X}$ and for all $\varepsilon > 0$, there is a constant $C_{K,\varepsilon}$ such that %
	all optimal solutions consisting of~$u^\star \in L^\infty([0,T],\mathbb{R}^{m})$ and $x^\star(\cdot;x^0,u^\star) \in W^{1,\infty}([0,T],\mathbb{R}^n)$ satisfy
	\begin{align*}
		\mu\Big(\{t \in [0,T]\,| \dist(x^\star(t;x^0,u^\star),\mathcal{T})>\varepsilon\}\Big) \leq C_{K,\varepsilon} \qquad\forall\,x^0 \in K, T>0
	\end{align*}
	where $\mu(S)$ denotes the Lebesgue-measure of a set~$S \subseteq \mathbb{R}^{n}$.
\end{definition}
Next, we can state our main theorem of this section; namely that strict dissipativity and cost controllability imply the turnpike property on manifolds.

\begin{theorem}\label{thm:ManifoldTurnpike}
	Let \eqref{eq:OCP_full_con} be strictly dissipative
	w.r.t.\ a manifold $\mathcal{T} \subseteq \mathbb{R}^{n}$ on~$\mathbb{X} \subseteq \mathbb{R}^4$ and cost controllable on~$\mathbb{X}$. %
	Then, the OCP satisfies the manifold turnpike property w.r.t.~$\mathcal{T}$ on $\mathbb{X}$.
\end{theorem}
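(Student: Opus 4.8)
The plan is to derive the measure bound by combining the lower bound on running cost furnished by strict dissipativity with the upper bound on the optimal value furnished by cost controllability. First I would fix a compact set $K \subseteq \mathbb{X}$, a tolerance $\varepsilon > 0$, an initial value $x^0 \in K$, and a horizon $T > 0$, and consider any optimal pair $(u^\star, x^\star)$. Writing $J_T(x^0,u^\star) = \int_0^T \ell(x^\star(t),u^\star(t))\,\mathrm{d}t = V_T(x^0)$ for the optimal value, the strict dissipativity inequality~\eqref{eq:sDI} rearranges to
\begin{align}\label{eq:plan-diss}
	\int_0^T \alpha(\dist(x^\star(t),\mathcal{T}))\,\mathrm{d}t \leq V_T(x^0) + S(x^0) - S(x^\star(T)).
\end{align}
Since $S$ is nonnegative, the term $-S(x^\star(T))$ is bounded above by $0$, and since $S$ is bounded on compact sets and $x^0 \in K$, the term $S(x^0)$ is bounded by a constant $S_K \doteq \sup_{x \in K} S(x) < \infty$ depending only on $K$. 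This yields $\int_0^T \alpha(\dist(x^\star(t),\mathcal{T}))\,\mathrm{d}t \leq V_T(x^0) + S_K$.

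The second step is to control $V_T(x^0)$ uniformly in $T$. By cost controllability~\eqref{eq:costcon} we have $V_T(x^0) \leq B_{\mathbb{X}}(T)\cdot \ell^\star(x^0)$, and since $B_{\mathbb{X}}$ is bounded, say by $\bar{B} \doteq \sup_{T \geq 0} B_{\mathbb{X}}(T) < \infty$, and $\ell^\star$ is continuous hence bounded on the compact set $K$ by some $\ell^\star_K$, we obtain the uniform bound $V_T(x^0) \leq \bar{B}\,\ell^\star_K$ for all $T \geq 0$ and all $x^0 \in K$. (Here one takes $\ell^\star_K \doteq \sup_{x\in K}\max\{\ell^\star(x),0\}$ so that the product is bounded above regardless of sign.) Combining with the previous step gives the horizon-independent bound
\begin{align}\label{eq:plan-comb}
	\int_0^T \alpha(\dist(x^\star(t),\mathcal{T}))\,\mathrm{d}t \leq \bar{B}\,\ell^\star_K + S_K \doteq D_K,
\end{align}
where $D_K$ depends only on $K$, not on $T$ or the particular optimal solution.

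The final step converts this integral bound into the measure bound via a Markov-type argument. Let $\Sigma_\varepsilon \doteq \{t \in [0,T]\,|\,\dist(x^\star(t),\mathcal{T}) > \varepsilon\}$. Because $\alpha$ is a class~$\mathcal{K}$-function, it is monotonically increasing, so on $\Sigma_\varepsilon$ we have $\alpha(\dist(x^\star(t),\mathcal{T})) > \alpha(\varepsilon) > 0$. Restricting the integral in~\eqref{eq:plan-comb} to $\Sigma_\varepsilon$ therefore gives $\alpha(\varepsilon)\cdot \mu(\Sigma_\varepsilon) \leq \int_{\Sigma_\varepsilon} \alpha(\dist(x^\star(t),\mathcal{T}))\,\mathrm{d}t \leq D_K$, whence $\mu(\Sigma_\varepsilon) \leq D_K/\alpha(\varepsilon) \doteq C_{K,\varepsilon}$. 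Since $C_{K,\varepsilon}$ is independent of $x^0 \in K$ and of $T$, this is exactly the asserted manifold turnpike property. The main obstacle is not any single estimate but rather the bookkeeping needed to certify that every bound is genuinely uniform in $T$ and in $x^0 \in K$; in particular one must verify that $\alpha(\varepsilon) > 0$ strictly (which requires $\alpha$ to be a $\mathcal{K}$-function rather than merely nonnegative) and that the sign issue in $\ell^\star_K$ is handled so that the cost-controllability product cannot be made arbitrarily negative.
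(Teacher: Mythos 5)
Your proposal is correct and follows essentially the same route as the paper's proof: rearrange the dissipation inequality, bound the storage term by compactness of $K$, bound the optimal cost uniformly in $T$ via cost controllability together with the Weierstra\ss{} theorem and boundedness of $B_{\mathbb{X}}$, and finish with the Markov-type estimate on the sublevel set where the distance exceeds $\varepsilon$. Your explicit handling of the sign of $\ell^\star$ is a small refinement the paper glosses over by simply asserting a nonnegative constant $c_2(K)$, but the argument is otherwise identical.
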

\begin{proof}
	Let $K \subseteq \mathbb{X}$ be an arbitrary but fixed compact set. %
	Moreover, for a given initial value~$x^0 \in K$, let $(x^\star,u^\star)$ be an optimal solution of the OCP in consideration. %
	In view of the assumed strict dissipativity~\eqref{eq:sDI}, we obtain
	\begin{align}\label{eq:DI1}
		\begin{split}
			\int_0^T \alpha(\dist(x^\star(t),\mathcal{T}))\,\text{d}t &\leq S(x^0) - S(x(T)) + \int_0^T \ell(x^\star(t),u^\star(t)) \,\text{d}t \\
			&\leq c_1(K) + \int_0^T \ell(x^\star(t),u^\star(t)) \,\text{d}t.
		\end{split}
	\end{align}
	where $c_1(K)\doteq   \sup_{x^0\in K}S(x^0)$ by boundedness on the compact set~$K$ and positivity of the storage function~$S$. %
	Next, we estimate the last term using cost controllability~\eqref{eq:costcon}, i.e., 
	\begin{align*}
		\int_0^T \ell(x^\star(t),u^\star(t)) \text{d}t \leq B_{\mathbb{X}}(T) \cdot \ell^\star(x^0) \leq c_2(K)
	\end{align*}
	for a constant $c_2(K)\geq 0$ independent of $T$, %
	where the last inequality follows by the Weierstra\ss{}-Theorem based on continuity of~$\ell$ and compactness of the set~$K$ 
	and boundedness of the growth function~$B_{\mathbb{X}}$. %
	Further, setting $S_\varepsilon = \{t\in [0,T]\,|\, \dist(x^\star(t),\mathcal{T})>\varepsilon\}$, we get the estimate
	\begin{align*}
		& \int_0^T \alpha(\dist(x^\star(t),\mathcal{T}))\,\text{d}t \\
		= & \int_{S_\varepsilon} \alpha(\dist(x^\star(t),\mathcal{T}))\,\text{d}t 
		+ \int_{[0,T]\setminus S_\varepsilon} \alpha(\dist(x^\star(t),\mathcal{T}))\,\mathrm{d}t \geq \mu(S_\varepsilon) \cdot \alpha(\varepsilon)
	\end{align*}
	using monotonicity of the $\mathcal{K}$-function~$\alpha$. Combining the derived inequalities, 
	this yields the upper bound $C_{K,\varepsilon} \doteq    (c_1(K)+c_2(K))/\alpha(\varepsilon)$ on~$\mu(S_\varepsilon)$, which concludes the proof. 
\end{proof}

\subsection{Optimal operation on a manifold}\label{Subsec:TrimManifold}

Besides establishing turnpike theorems, dissipativity can be used to deduce performance results for model predictive control %
and to characterize the asymptotics of optimal control problems. %
In this context, optimal operation at the turnpike, in our case a manifold, plays an important role.

To the end of analyzing such links, we propose the following definition of optimal operation on a manifold, which is a natural extension of the established concept of optimal operation at steady state, cf.~\cite{Angeli2012}. We assume in the following that the cost functional is zero on the manifold. This can always be achieved, if the stage costs are constant on the manifold, e.g.\ by subtracting the constant.
\begin{definition}[Optimal operation on the trim manifold $\mathcal{T}$]\label{def:OptimallyOperated}
	Let the stage cost be zero on the trim manifold~$\mathcal{T}$, which was defined in Definition~\ref{rem:TrimManifold}, 
	i.e., $\ell(s,v_s,v_\theta,u) = 0$ if $T(s,v_\theta,u) = 0$ holds (in particular, $x = (s\,v_s\,\theta\,v_\theta)^\top \in \mathcal{T}$). %
	Then, System~\eqref{eq:v_s} is said to be optimally operated on the manifold $\mcl{T} \subseteq \mathbb{R}^4$ %
	if, for all $x^0 \in \mathbb{R}^4$ and all 
	control-state pairs $(x,u) \in W^{1,\infty}([0,\infty),\mathbb{R}^4) \times L^\infty([0,\infty),\mathbb{R}^{m})$, %
	the following inequality holds
	\begin{equation}\label{eq:OptOpSS}
		\liminf_{T \to \infty}\ \frac 1 T \int_0^T \ell (s(t),v_s(t),v_\theta (t),u(t))\,\mathrm{d}t \geq 0.
	\end{equation}
\end{definition}

Optimally operated at the manifold means that the averaged costs for any trajectory is at least as high as the averaged cost on the manifold in the limit.
\begin{proposition}[Optimal operation on  $\mathcal{T}$]\ \\%
	Let \eqref{eq:OCP_full_con} be strictly dissipative on a set~$\mathbb{X} \subseteq \mathbb{R}^4$ %
	such that the storage function~$S$ is bounded on the terminal region~$\Psi$. %
	Moreover, let the stage cost~$\ell$ be zero on the manifold~$\mcl{T} \subseteq \mathbb{R}^4$ as assumed in Definition~\ref{def:OptimallyOperated}. %
	Then, the system governed by the dynamics~\eqref{eq:v_s} is optimally operated %
	on~$\mathcal{T}$.
\end{proposition}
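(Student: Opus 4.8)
The plan is to derive the optimal-operation inequality \eqref{eq:OptOpSS} directly from the strict dissipativity inequality \eqref{eq:sDI}, using the assumption that $\ell$ vanishes on the manifold $\mcl{T}$. The key structural observation is that strict dissipativity provides a lower bound on the time-averaged cost in terms of the storage function at the endpoints, and that the endpoint contributions are negligible after dividing by $T$ and letting $T \to \infty$.

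First, I would fix an arbitrary initial value $x^0 \in \mathbb{R}^4$ and an arbitrary control-state pair $(x,u)$ as in Definition~\ref{def:OptimallyOperated}. Rearranging the dissipation inequality \eqref{eq:sDI} gives, for every $T \geq 0$,
\begin{align*}
	\int_0^T \ell(x(t),u(t))\,\mathrm{d}t \geq S(x(T)) - S(x^0) + \int_0^T \alpha(\dist(x(t),\mcl{T}))\,\mathrm{d}t.
\end{align*}
Since $S \geq 0$ and $\alpha$ is a $\mathcal{K}$-function (hence nonnegative), the right-hand side is bounded below by $-S(x^0)$. Dividing by $T$ yields
\begin{align*}
	\frac{1}{T}\int_0^T \ell(x(t),u(t))\,\mathrm{d}t \geq -\frac{S(x^0)}{T},
\end{align*}
and taking $\liminf_{T \to \infty}$ makes the right-hand side vanish, which is exactly \eqref{eq:OptOpSS}.

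The main technical point I would need to handle carefully is the applicability of \eqref{eq:sDI}: as stated in Definition~\ref{def:StrictDissipativity}, the dissipation inequality is assumed only for \emph{optimal} controls $u^\star$ and their associated trajectories, whereas Definition~\ref{def:OptimallyOperated} quantifies over \emph{arbitrary} admissible control-state pairs. I expect this to be the main obstacle, and I would resolve it by invoking the hypothesis that $S$ is bounded on the terminal region $\Psi$ together with the terminal constraint $x(T) \in \Psi$ built into \eqref{eq:OCP_full_con}; this guarantees that the endpoint term $S(x(T))$ stays bounded so that the $-S(x(T))/T$ contribution also vanishes in the limit. Concretely, I would argue that the lower bound $\int_0^T \ell\,\mathrm{d}t \geq S(x(T)) - S(x^0)$ holds along the relevant trajectories with $S(x(T))$ uniformly bounded, so dividing by $T$ and passing to the limit leaves only the nonnegative dissipation integral.

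To complete the argument I would simply note that the vanishing of $\ell$ on $\mcl{T}$ renders the manifold a zero-cost reference, so the inequality \eqref{eq:OptOpSS} indeed certifies that no trajectory can undercut, in the time-averaged sense, the cost accrued on $\mcl{T}$. The remaining steps are routine: nonnegativity of $\alpha(\dist(\cdot,\mcl{T}))$ ensures the dissipation integral only helps the inequality, and boundedness of $S$ on compacts (and on $\Psi$) guarantees the endpoint terms are $O(1)$ and thus asymptotically negligible after normalization by $T$.
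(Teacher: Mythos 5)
There is a genuine gap, and it sits exactly where you suspected. Your entire derivation rests on the first displayed inequality,
\begin{align*}
	\int_0^T \ell(x(t),u(t))\,\mathrm{d}t \geq S(x(T)) - S(x^0) + \int_0^T \alpha(\dist(x(t),\mcl{T}))\,\mathrm{d}t,
\end{align*}
applied to an \emph{arbitrary} admissible pair $(x,u)$. But Definition~\ref{def:StrictDissipativity} postulates \eqref{eq:sDI} only along \emph{optimal} control-state pairs, so this inequality is simply not available for the trajectories over which Definition~\ref{def:OptimallyOperated} quantifies. You correctly flag this as the main obstacle, but the fix you propose --- boundedness of $S$ on the terminal region $\Psi$ --- is orthogonal to it: that hypothesis only controls the size of the endpoint term $S(\cdot(T))$ along trajectories for which \eqref{eq:sDI} already holds; it does nothing to extend the dissipation inequality from optimal to arbitrary trajectories. (In fact, since $S$ maps into $\mathbb{R}_{\geq 0}$, the endpoint term is harmless for the lower bound anyway.)

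The missing idea is to route the argument through the optimal value function. For each horizon $T$, the existence of your admissible pair $(x,u)$ guarantees feasibility of \eqref{eq:OCP_full_con}, so one may take an optimal pair $(x^\star,u^\star)$, apply \eqref{eq:sDI} to \emph{it}, and conclude $\int_0^T \ell(x^\star,u^\star)\,\mathrm{d}t \geq S(x^\star(T)) - S(x^0) \geq -S(x^0)$. Then, by optimality, $\int_0^T \ell(x,u)\,\mathrm{d}t \geq \int_0^T \ell(x^\star,u^\star)\,\mathrm{d}t \geq -S(x^0)$ for the arbitrary admissible pair, and dividing by $T$ and letting $T \to \infty$ gives \eqref{eq:OptOpSS}. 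This comparison step ("optimality of the control-state pair $(u^\star_k,x^\star_k)$") is precisely what the paper invokes; it packages the argument as a contradiction along a sequence $T_k \to \infty$, but the substance is the same. With that one step inserted, your direct version goes through; without it, the proof does not start.
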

\begin{proof}
	The proof follows along the lines of~\cite[Theorem 3]{Faulwasser2017} by contradiction. %
	Let $x^0 \in \mathbb{R}^4$ be given. %
	Suppose that there exists a monotonically increasing sequence $\{T_k\}_{k\in \mathbb{N}}$, $T_k\to \infty$ for $k\to \infty$, %
	with $(x^k,u^k) \in W^{1,\infty}([0,T_k],\mathbb{R}^4) \times L^\infty([0,T_k],\mathbb{R}^{m})$ admissible for the OCP in consideration such that 
	\begin{align}\label{eq:ProofOptimallyOperated}
		\liminf_{k \to \infty}\ \frac 1 {T_k} \int_0^{T_k} \ell(s^k(t),v_s^k(t),v_\theta^k(t),u^k(t))\,\text{d}t \leq -\delta
	\end{align}
	for some $\delta >0$.
	Dividing the strict dissipativity inequality~\eqref{eq:sDI} by $T_k$ yields
	\begin{align*}
		\frac{1}{T_k} \left(S(x^\star_k (T_k))- S(x^0)\right) \leq \frac{1}{T_k} \int_0^{T_k} \ell(x^\star_k(t),u^\star_k(t)) & -\alpha(\dist(x^\star_k(t),\mathcal{T}))\,\mathrm{d}t
	\end{align*}
	where $x^\star_k = x^\star_k(\cdot;x^0,u^\star_k)$ solves %
	the OCP~\eqref{eq:OCP_full_con} 
	with optimization horizon~$T_k$ %
	(feasibility is ensured by the existence of the sequence~$(T_k)_{k \in \mathbb{N}}$) and, with a slight abuse of notation, %
	$\ell(x^\star_k(t),u^\star_k(t))$ denotes the stage cost evaluated along the corresponding optimal control-state pair. %
	Invoking the assumed boundedness of~$S$ on the terminal region, the difference $S(x^\star_k(T))- S(x^0)$ is bounded and %
	the left hand side equals zero. Hence, taking non-negativity of~$\alpha$ and optimality of the control-state pair $(u^\star_k,x^\star_k)$ into account, %
	yields non-negativity of the left hand side of inequality~\eqref{eq:ProofOptimallyOperated} and, thus, $0 \leq -\delta$, i.e.\ the desired contradiction.  
\end{proof}

\begin{remark}[Link to overtaking optimality]
	A more classical concept, which originated in the analysis of infinite-horizon OCPs, is overtaking optimality~\cite{Carlson91,Carlson90a}. 	That is, instead of~\eqref{eq:OptOpSS}
	consider 
	\begin{equation}
		\liminf_{T \to \infty}  \int_0^T \ell (s(t),v_s(t),v_\theta (t),u(t))\,\mathrm{d}t  -  \int_0^T \ell (\bar s,0,\bar v_\theta (t),\bar u(t))\,\mathrm{d}t  \geq 0.
	\end{equation}
	If this condition is considered for a fixed initial condition, it gives the concept of overtaking optimality. If it is considered for a set of initial conditions, it defines a generalized concept of optimal operation being characterized by $(\bar s,0,\bar v_\theta (t),\bar u(t))$.
	We refer to~\cite[Chap. 4]{Pirkelmann2020} for further (discrete-time) insights on the link between overtaking optimality and optimal operation. 
\end{remark}


\section{Hamiltonian perspective and Legendre transformation of OCPs}
\label{sec:trim_tp:H}

The Euler-Lagrange equations~\eqref{ELStandard}   
can alternatively be written in Hamiltonian form, i.e.,\ $\dot{q} = \partial H/ \partial p, \; \dot{p} = -\partial H/ \partial q + f(u)$, by means of the Hamiltonian
\begin{equation} \label{eq:HamiltonFunction} 
	H(q,p) = \frac 12 {p}^\top M^{-1}(q) {p} + V(q),
\end{equation}
where $M^{-1}$ is the inverse of the mass matrix.
With configuration variables $q\in Q$, the corresponding momenta $p$ lie in the cotangent space $T_q^*Q$.
As $M(q)$ is assumed to be regular, the Lagrange function \eqref{eq:LagrangeFunction} and the Hamiltonian \eqref{eq:HamiltonFunction} are hyperregular and Euler-Lagrange and Hamilton equations are equivalent.
The Legendre transform of \eqref{eq:LagrangeFunction} gives the relation $p=M(q)\dot{q}$. 
In particular, under Assumption~\ref{Ass:BlockDiagonalForcing}, the Hamilton equations in shape and cyclic variables are
\begin{align}\label{eq:HBlockDiag_ps}\tag{H}
	\begin{split}
		\dot{s}& = M_{11}^{-1}(s) p_s \\
		\dot{p}_s & =  - \frac 12 M_{11}^{-1}(s)^\prime p_s^2 - \frac 12 M^{-1}_{22}(s)^\prime p_\theta^2 - V^\prime(s) + f_s(u)   \\
		\dot{\theta} & = M_{22}^{-1}(s) p_\theta  \\
		\dot{p}_\theta & = f_\theta(u)   
	\end{split}
\end{align}
with $p_s = M_{11}(s) v_s, \; p_\theta = M_{22}(s) v_\theta$.

Assuming orthogonal forcing, i.e.\ $f_\theta \equiv 0$, the last equation of~\eqref{eq:HBlockDiag_ps} directly gives the conserved quantity induced by the symmetry, namely $p_\theta = M_{22}(s) v_\theta = const.$
Recall the characterization of trim primitives in Lemma~\ref{LemmaTrimPrimitives} via the forced amended potential \eqref{Eq:ForcedAmendedPotential}, which yields for \eqref{eq:HBlockDiag_ps}
\begin{align}\label{eq:trimCritPointVumu_2}
	\nabla_s V_\mu^u(s,\mu,u) = V^\prime(s) + \frac 12 {M^{-1}_{22}}(s)^\prime \mu^2 -f_s(u) = 0
\end{align}
with $\mu = p_\theta = M_{22}(s) v_\theta$.
Corollary~\ref{cor:VmuT} states that $\nabla_s V_\mu^u(s,\mu,u) = T(s, M_{22}^{-1}p_\theta,u)$ and thus, $\nabla_s V_\mu^u(s,\mu,u)$ can alternatively be used to define the trim manifold.

With $\tilde{\ell}(s,p_s,\theta,p_\theta)\doteq   \ell(s,M_{11}(s)v_s,\theta,M_{22}(s)v_\theta)$
, the optimal control problem
\eqref{eq:OCP_full}, the reduced problem on the trim manifold \eqref{eq:T-OptimizationProblem}, as well as the steady state optimization problem \eqref{eq:EL_OCP_trim} can alternatively be stated in the Hamiltonian setting, i.e., \ replacing the corresponding Euler-Lagrange equations by the Hamiltonian counterparts.
In complete analogy, first-order necessary conditions for optimality can be derived and compared in order to see that the problems lead to identical solutions if the full optimal control problem is constrained to solutions satisfying $\nabla_s V_\mu^s =0$. Moreover, as we will show next, based on the knowledge of the underlying coordinate change one can also derive a coordinate change for the adjoints.\footnote{Note that here we consider a Legendre transformation relating the mechanical Hamiltonian~\eqref{eq:HamiltonFunction} to the mechanical Lagrangian~\eqref{eq:LagrangeFunction}. One could as well consider a  Legendre transformation of the optimal control Hamiltonian, this leads to the Lax-Hopf formulas and related approaches, see~\cite{Claudel10a,Claudel10b}.} 

\subsection{Legendre-induced transformation of adjoints}
The diffeomorphism $\Phi:T_qQ \to T_q^*Q$ given by
\[
\Phi: \begin{bmatrix}
	s \\ v_s \\ \theta \\ v_\theta
\end{bmatrix} \mapsto \begin{bmatrix}
	s \\ p_s \\ \theta \\ p_\theta
\end{bmatrix} = \begin{bmatrix}
	s \\ v_s M_{11}(s) \\ \theta \\ v_\theta M_{22}(s)
\end{bmatrix} 
\]
maps \eqref{eq:ELBlockDiag_vs} to \eqref{eq:HBlockDiag_ps}. Put differently, the coordinate change $\Phi$ is induced by the by the Legendre transformation of \eqref{eq:LagrangeFunction} to  \eqref{eq:HamiltonFunction}.
\begin{lemma}[Legendre-induced adjoint transformation]
	Consider \eqref{eq:OCP_full} and let\\ $(x^\star, u^\star, \lambda_0^\star, \lambda^\star)$ be an optimal lift. Consider a diffeomorphic coordinate change $\Phi: \mathbb{R}^{n} \to \mathbb{R}^{n}, x\mapsto z$ valid along any optimal solution of \eqref{eq:OCP_full}. Suppose that \eqref{eq:OCP_full} is normal and set $\lambda_0^\star =1$. Let $\nu$ be the adjoint corresponding to \eqref{eq:OCP_full} expressed in the coordinates $z = \Phi(x)$. Then the corresponding adjoints satisfy
	\begin{equation} \label{eq:AdjointTrafo}
		\left.\left(\frac{\partial \Phi}{\partial x}\right)^\top\right|_{x = \Phi^{-1}(z)} \nu^\star = \lambda^\star \quad 
	\end{equation}
	with 
	\begin{equation}  \label{eq:AdjointTrafoII}
		\left.\left(\frac{\partial \Phi}{\partial x}\right)^\top\right|_{x = \Phi^{-1}(z)}
		= \begin{bmatrix}
			1 & M_{11}'v_s & 0 & M_{22}'v_\theta \\
			0 & M_{11} &0 &0 \\
			0 & 0 & 1 & 0 \\
			0 & 0 & 0 & M_{22}
		\end{bmatrix}
		=
		\begin{bmatrix}
			1 & M_{11}'M_{11}^{-1} p_s & 0 & M_{22}'M_{22}^{-1} p_\theta \\
			0 & M_{11} &0 &0 \\
			0 & 0 & 1 & 0 \\
			0 & 0 & 0 & M_{22}
		\end{bmatrix}
	\end{equation}
\end{lemma}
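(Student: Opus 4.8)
\section*{Proof sketch}

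The plan is to exploit the coordinate-invariance of Pontryagin's maximum principle: a change of state coordinates on the base space lifts canonically to the adjoints, and the optimal-control Hamiltonian is invariant under this lift. Write $A(x)\doteq \partial\Phi/\partial x$ for the Jacobian. Since $\Phi$ is a diffeomorphism, the dynamics $\dot x = F(x,u)$ of~\eqref{eq:v_s} transform into $\dot z = \tilde F(z,u)\doteq A(x)F(x,u)\big|_{x=\Phi^{-1}(z)}$ and the stage cost into $\tilde\ell(z,u)\doteq \ell(\Phi^{-1}(z),u)$. The central observation is the pointwise identity
\[
\tilde{\mathcal{H}}(z,u,\nu)=\tilde\ell(z,u)+\nu^\top \tilde F(z,u)=\ell(x,u)+\nu^\top A(x)F(x,u)=\mathcal{H}\big(x,u,A(x)^\top\nu\big),
\]
valid whenever $z=\Phi(x)$, where I used $\nu^\top A F=(A^\top\nu)^\top F$. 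Thus the two optimal-control Hamiltonians coincide under the correspondence $\lambda=A(x)^\top\nu$, which is exactly relation~\eqref{eq:AdjointTrafo}.

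Next I would upgrade this identity to a statement about extremals. Starting from the given optimal lift $(x^\star,u^\star,\lambda_0^\star,\lambda^\star)$ with $\lambda_0^\star=1$, define $\nu^\star(t)\doteq A(x^\star(t))^{-\top}\lambda^\star(t)$, i.e.\ the cotangent lift $(x,\lambda)\mapsto(\Phi(x),A(x)^{-\top}\lambda)$. Two things must be checked. First, the gradient-stationarity condition $\partial_u\mathcal{H}=0$ is preserved, because the control is untouched by $\Phi$ and $\tilde{\mathcal{H}}=\mathcal{H}$ under the correspondence, so $\partial_u\tilde{\mathcal{H}}=\partial_u\mathcal{H}=0$. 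Second, the coupled primal/adjoint (canonical) equations are preserved: for the frozen optimal control $u^\star(t)$, the map is the cotangent lift of $\Phi$, which is a symplectomorphism and therefore intertwines the Hamiltonian flows of $\mathcal{H}(\cdot,\cdot,u^\star(t))$ and $\tilde{\mathcal{H}}(\cdot,\cdot,u^\star(t))$. Hence $(z^\star,\nu^\star)=(\Phi(x^\star),A(x^\star)^{-\top}\lambda^\star)$ solves the transformed NCO, and the transversality condition $\lambda^\star(T)=0$ maps to $\nu^\star(T)=0$ since the lift is linear in the covector. Under the normality assumption the transformed adjoint is unique, so the lifted covector coincides with the adjoint $\nu^\star$ of the transformed problem, yielding $A(x^\star)^\top\nu^\star=\lambda^\star$; along the optimal solution $\Phi^{-1}(z^\star(t))=x^\star(t)$, so this is precisely~\eqref{eq:AdjointTrafo}.

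The step I expect to be the main obstacle is exactly the preservation of the adjoint dynamics. If one prefers a direct verification over invoking symplecticity, one differentiates $\lambda^\star=A(x^\star)^\top\nu^\star$ along the trajectory and substitutes the two adjoint ODEs; the difficulty is to show that the term $\tfrac{\mathrm d}{\mathrm dt}\!\left[A(x^\star)^\top\right]\nu^\star$, generated by the state-dependence of the Jacobian, cancels exactly against the mismatch between $(\partial_x F)^\top\lambda^\star$ and $A^\top(\partial_z\tilde F)^\top\nu^\star$. This cancellation is the content of the chain rule applied to $\tilde F=AF$ and is where the index bookkeeping is heaviest. A one-line alternative uses $\lambda^\star=\nabla_x V_T$ and $\nu^\star=\nabla_z\tilde V_T$ together with $\tilde V_T=V_T\circ\Phi^{-1}$, but it presumes differentiability of the value function, which the ODE-level argument avoids.

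Finally I would compute the explicit Jacobian. With $\Phi(s,v_s,\theta,v_\theta)=(s,\,v_sM_{11}(s),\,\theta,\,v_\theta M_{22}(s))$, row-wise differentiation gives the nonzero entries $A_{11}=1$, $A_{21}=v_sM_{11}'$, $A_{22}=M_{11}$, $A_{33}=1$, $A_{41}=v_\theta M_{22}'$, and $A_{44}=M_{22}$. Transposing yields the first matrix in~\eqref{eq:AdjointTrafoII}, and substituting $v_s=M_{11}^{-1}p_s$ and $v_\theta=M_{22}^{-1}p_\theta$ gives the second, completing the proof.
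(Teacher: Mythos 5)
Your proposal is correct and rests on the same core identity as the paper's proof: writing the optimal control Hamiltonian of \eqref{eq:OCP_full} in both coordinate systems and observing that $\tilde{\mathcal{H}}(z,u,\nu)=\mathcal{H}(x,u,(\partial\Phi/\partial x)^\top\nu)$ along $z=\Phi(x)$, which forces the identification \eqref{eq:AdjointTrafo}; the explicit Jacobian computation yielding \eqref{eq:AdjointTrafoII} also matches. The only difference is one of rigor rather than route: the paper stops at the Hamiltonian comparison and treats the conclusion as immediate, whereas you additionally verify that the cotangent lift $(x,\lambda)\mapsto(\Phi(x),(\partial\Phi/\partial x)^{-\top}\lambda)$ preserves gradient stationarity, the canonical equations (via symplecticity, with the direct chain-rule cancellation sketched as a fallback), and transversality, and then invoke normality for uniqueness. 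That extra verification is exactly the step the paper leaves implicit, so your write-up is a strictly more careful version of the same argument, not a different one.
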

\begin{proof}
	Recall the optimal control Hamiltonian of \eqref{eq:OCP_full} written in $x-u$ coordinates
	\[
	{\mathcal{H}}(x,u,\lambda_0, \lambda) = \lambda_0\ell(x,u) +\lambda^\top f(x,u).
	\]
	Notice that in $z-u$ coordinates the dynamics $\dot x = f(x,u)$ read
	\[
	\dot z  = \frac{\partial \Phi}{\partial x}  f(x,u) = \left.\frac{\partial \Phi}{\partial x}\right|_{x = \Phi^{-1}(z)} f\left( \Phi^{-1}(z), u\right) \doteq g(z,u).
	\]
	Now, consider the optimal control Hamiltonian of \eqref{eq:OCP_full} expressed in $z-u$ coordinates
	\[
	\tilde{\mathcal{H}}(z,u,\nu_0, \nu) =  \nu_0\tilde \ell(z,u) + \nu^\top g(z,u).
	\]
	Substituting the expression for $g(z,u)$ and $\tilde\ell(z,u) \doteq \ell(\Phi^{-1}(z), u)$ yields
	\[
	\tilde{\mathcal{H}}(z,u,\nu_0, \nu) =  \nu_0\ell(\Phi^{-1}(z),u) + \nu^\top \left.\frac{\partial \Phi}{\partial x}\right|_{x = \Phi^{-1}(z)} f\left( \Phi^{-1}(z), u\right).
	\]
	As we assume that \eqref{eq:OCP_full} is normal, we set $\lambda_0^\star =1 = \nu_0^\star$. Comparing the last equation with the one for ${\mathcal{H}}(x,u,\lambda_0, \lambda)$ gives the first part of the assertion. The expression for $\left(\frac{\partial \Phi}{\partial x}\right)^\top$ follows from the definition of the coordinate change $\Phi$. 
\end{proof}
We remark that the transformation of adjoints does mainly rely on $\Phi$ being a diffeomorphic coordinate change along optimal solutions. That is, it can be applied to general OCPs. It is the structure of the matrix \eqref{eq:AdjointTrafoII} which is induced by the underlying Legendre transformation.

\section{Kepler Problem: Optimal operation on manifold turnpike}\label{sec:Kepler}
In astrodynamics, n-body problems are widely used to describe the dynamics of bodies in the gravitational field.
The two-body problem is also known as the Kepler problem and might be used to describe the motion of a spacecraft relative to a planet's  or a moon's gravitational field (ignoring all other influences from more distant bodies).
Coordinates can be chosen to describe the motion of the second body, relatively to the first body's motion, via radius $s \in \mathbb{R}_{>0}$ and angle $\theta \in [0,2\pi)$. 
With $v_s$ and $v_\theta$ denoting the corresponding velocities, the Lagrangian is given by
\begin{align*}
	L(s,v_s,\theta,v_\theta) = \frac 12 m_2 \left(v_s^2 + s^2 v_\theta^2\right) + \gamma \frac {m_1 m_2}{s} %
\end{align*}	
with $m_1,m_2$ being the masses of the primary (e.g.\ planet) and the secondary (e.g.\ spacecraft) body and $\gamma$ the gravitational constant.
As in \cite{SaakeOberBl}, we choose $k \doteq \gamma m_1 m_2 = 1.016895192894334 \cdot 10^3$ and $m_2  = 1.0$.

We have $L$ being independent of $\theta$, so this is a cyclic variable.
For $f_\theta(u)=0$, the conserved quantity is $p_\theta = m_2 s^2 v\theta$ (cf. Remark~\ref{rem:orthForcing}).
Further, Assumption~\ref{Ass:BlockDiagonalForcing} is satisfied, since the mass matrix is
\[ M = \begin{bmatrix}  m_2 & 0 \\ 0 & m_2 s^2 \end{bmatrix}, \text{ in particular, } M_{11}(s) = m_2, \, M_{22}(s) = m_2 s^2,\]
and $V(s) = -\gamma \frac {m_1 m_2}{s}$.
Note that the Kepler problem is special in the fact that $M_{11}$ is constant and thus, $M_{11}' = 0$.
Moreover, the model has a singularity at $s=0$, so we restrict to $s>0$. Then, $M_{22}\neq 0$ holds.

The Euler-Lagrange equations, directly written in first-order form, are
\begin{align*}
	\dot{s} & = v_s, &
	\dot{v}_s & = s v_\theta^2 - \frac {\gamma m_1 m_2}{m_2 s^2}  + \frac{1}{m_2} f_s(u),\\
	\dot{\theta} & = v_\theta,  &
	\dot{v}_\theta & =  - \frac{2}{s} v_\theta v_s  + \frac{1}{m_2 s^2} f_\theta(u),
\end{align*}
with control $u = (u_s,u_\theta)^\top \in \mathbb{R}^2$ and forcing $f_s(u) = u_s$, $f_\theta(u) = u_\theta$.

For the function $T$ of Lemma~\ref{lemma:T}, we obtain
\[ T(s,v_\theta,u) = s v_\theta^2 - \frac{k}{m_2 s^2} + \frac{1}{m_2} f_s(u), \]
i.e.\ any triple $(\tilde{s},\tilde{v_\theta},\tilde{u})$ such that $T(\tilde{s},\tilde{v_\theta},\tilde{u})=0$ generates a trim primitive.
Geometrically, trim primitives are circular motions of body $m_2$ about $m_1$.
The trim manifold reads
\[\mcl{T} =   \left\{ (s, v_s, \theta, v_\theta)^\top \in TQ \,\left|\, v_s =0,\,  u_s = -m_2 s v_\theta^2 + \frac{k}{s^2} \right. \right\}.\]

We consider optimal control problems of type \eqref{eq:OCP_full} on different, but always fixed, time horizons $T>0$.
The starting point is defined as $(s^0,v_s^0,\theta^0,v_\theta^0) = (5.0,0.0,0.0,$\\ $\sqrt{\frac{k}{m_2 5.0^3}})$; this corresponds to a trim primitive with zero control $u_s$.
We set terminal constraints $$
(s,v_s,v_\theta)(T) \doteq (s^f,v_s^f,v_\theta^f) = (6.0,0.0,\sqrt{\frac{k}{m_2 6.0^3}})$$
but do not constrain $\theta(T)$, since there is no need to fix the exact point on the uncontrolled periodic orbit which is defined by $(s^f,v_s^f,v_\theta^f)$.

Furthermore, let $\tilde{x} \doteq (\tilde{s},\tilde{v}_s,\tilde{\theta},\tilde{v_\theta}) = (4.5,0.0,0.0,\sqrt{\frac{k}{m_2 4.5^3}})$ be given and note that $T(\tilde{s},\tilde{v}_\theta,\tilde{u})=0$ holds for $\tilde{u} = (0.0,0.0)$, i.e.\ this defines an uncontrolled trim primitive.

\begin{figure}
	\centering
	\includegraphics[width=\textwidth]{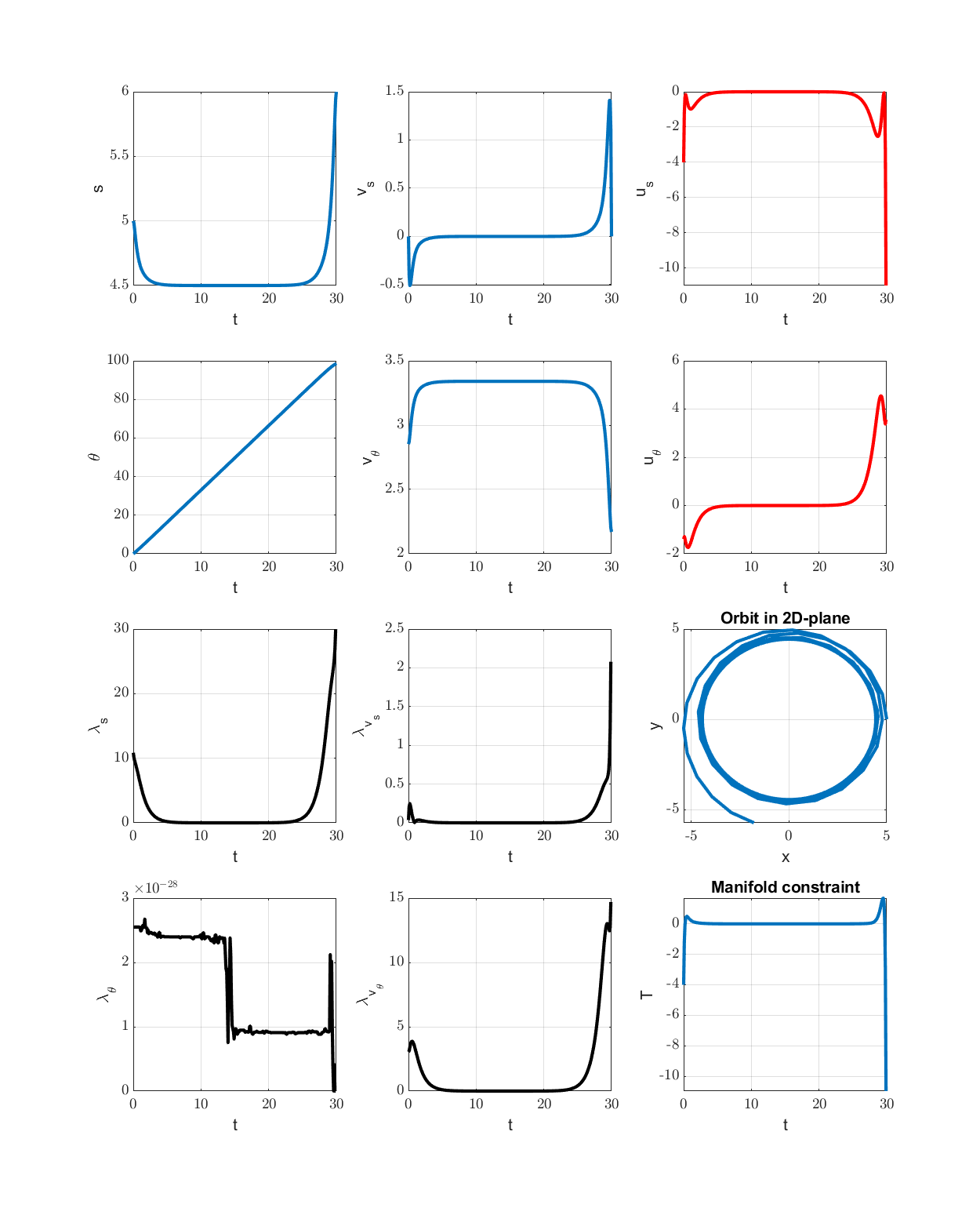}
	\caption{Example with turnpike on a trim at $s=4.5$ for quadratic cost functional as in \eqref{eq:CostsQR} on time horizon $T=30$.}
	\label{fig:QRTrim}
\end{figure}

Firstly, let us consider the cost functional 
\begin{equation} \label{eq:CostsQR}
	\ell(s,v_s,v_\theta,u) = \frac{1}{2} (x-\tilde{x})^\top Q (x-\tilde{x}) + (u-\tilde{u})^\top R (u-\tilde{u})
\end{equation}
with $Q=diag([1,0,1,1])$ and $R=10^{-2}\cdot diag([1,1])$.

We solve the corresponding (OCP) with CasADI, using a direct method with the RK-4 integrator for a discretization with $300$ nodes on a time interval with $T=30$. The result is given in Figure~\ref{fig:QRTrim}.
A turnpike can be observed at $(\tilde{s},\tilde{v}_\theta,\tilde{u}_s) = (4.5,\sqrt{\frac{k}{m_2 4.5^3}},0.0)$ with $v_s = v_\theta=u_\theta =0$ and all adjoints vanishing, too, for the largest part of the time interval.
The incoming and outgoing arcs are caused by the boundary conditions.
Mechanically, the solution corresponds to a circular-shaped turnpike orbit in the $2D$-plane, which is an element of the trim manifold.\\

While the first example specifically favors the $s=4.5$-orbit by construction, we now consider running costs which are designed using the general trim manifold description, i.e.\ 
\begin{equation} \label{eq:CostsTrim}
	\ell(s,v_s,v_\theta,u) = 5\cdot10^3 \cdot  T(s,v_\theta,u)^2 + \frac{1}{2} (s-\tilde{s})^2 + \frac{1}{2} (u-\tilde{u})^\top R (u-\tilde{u})
\end{equation}
with $\tilde{s} = 5.3$, $\tilde{u} = [0,1]^\top$, $R=10^{-3}\cdot diag([1,1])$.
Thus, the first term of $\ell$ vanishes whenever the system is on $\mathcal{T}$.
This criterion is complemented by the other two terms with arbitrarily chosen values of $\tilde{s}$ and $\tilde{u}$.
Setting the initial condition to $(s^0,v_s^0,\theta^0,v_\theta^0) = (5.3,0.0,0.0,\sqrt{\frac{k}{m_2 5.3^3}})$ makes an incoming arc obsolete; the system stays in the trim that is defined by the initial point almost until the end of the time interval, when the term $(u-\tilde{u})^\top R (u-\tilde{u})$ of $\ell$ in \eqref{eq:CostsTrim} rules the optimal solution.
This can be observed in Figure~\ref{fig:TOCP}, in which we show the computed solution for $T=100$ (RK4-integrator with $200$ discretization nodes).
Moreover, we depict the solution of \eqref{eq:T-OptimizationProblem}, which we have solved with CasADI, as well, using identical initial values for $(\bar{\theta},\bar{v}_{\bar{\theta}},\bar{u})$ and time horizon, with dashed lines.
Recall that in \eqref{eq:T-OptimizationProblem},  $(\bar{\theta},\bar{v}_{\bar{\theta}},\bar{u})$ are the dynamic states and controls, while $\bar{s}$ is a scalar parameter and $T(\bar{s},\bar{v}_{\bar{\theta}},\bar{u})=0$ is added as a nonlinear constraint.
For both problems, the same turnpike is approached, as can be seen in Figure~\ref{fig:TOCP} in the subfigures of the states and controls.
However, the adjoints for $s$ show different behavior, since in \eqref{eq:OCP_full}, there is an initial condition on $s$, while in \eqref{eq:T-OptimizationProblem}, there is not.
Further numerical discrepancies between the adjoints presumably stay in context with the accuracy of which $T=0$ is fulfilled when either considered within the objective (in \eqref{eq:OCP_full}) or as an equality constraint (in \eqref{eq:T-OptimizationProblem}).
Note that we do not consider terminal constraints in this example in order to match the setting of Proposition~\ref{prop:NCOmatch}.

\begin{figure}
	\centering
	\includegraphics[width=\textwidth]{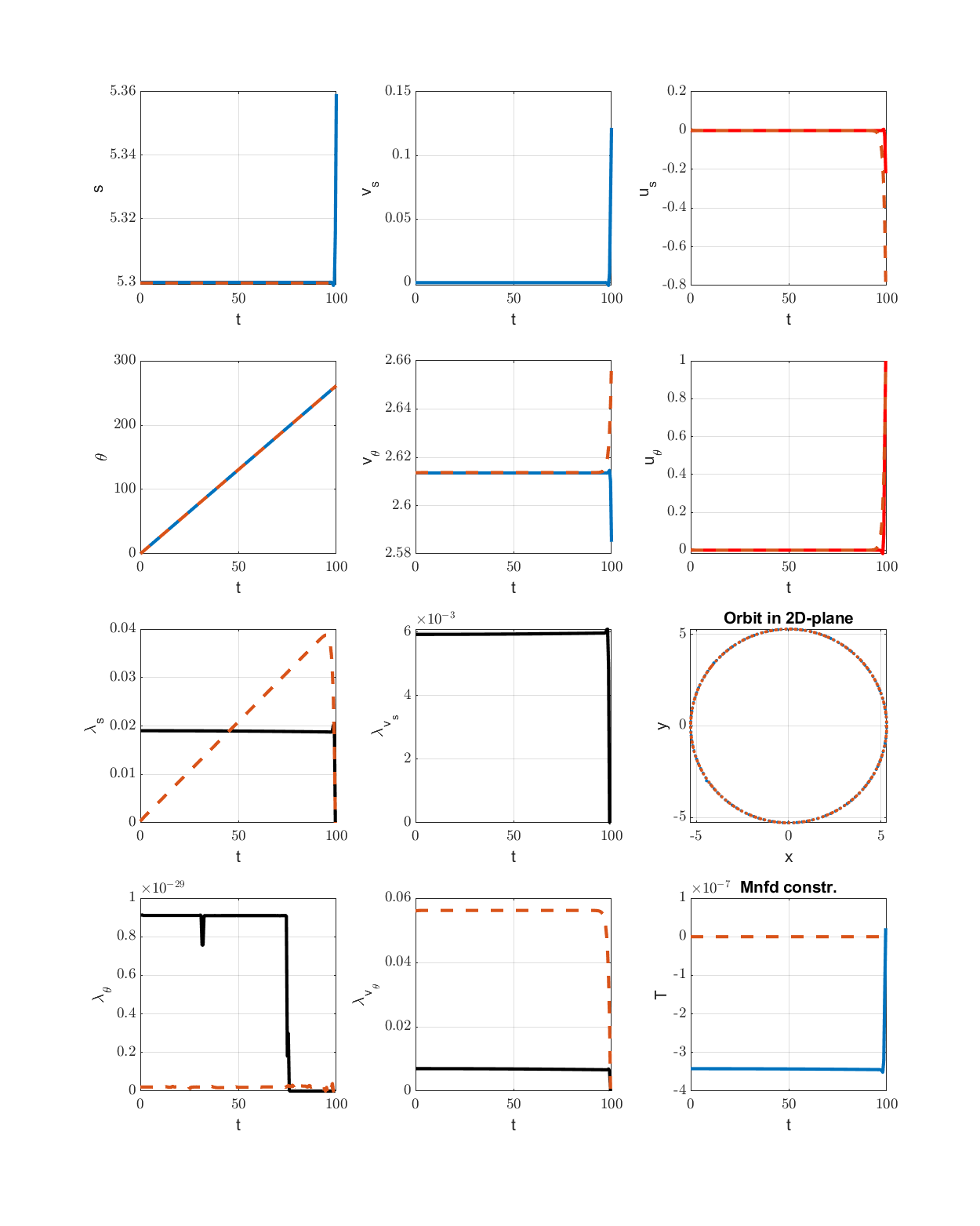}
	\caption{Example with turnpike on a trim obtained from running costs \eqref{eq:CostsTrim} on time horizon $T=100$.}
	\label{fig:TOCP}
\end{figure}

\section{Conclusions and Outlook}
The paper has studied the link of turnpikes, trim solutions, and symmetries in OCPs for mechanical systems. 
Specifically, we considered Lagrangian systems with symmetries. Based on the established concepts of trim solutions, we have shown that if either one first formulates the OCP and then applies the trim condition to the optimality system, or one first applies the trim condition and then formulates a reduced OCP, one obtains the same result. This generalizes a classical insight, wherein turnpikes are characterized as the attractive equilibria of the optimality system. Hence, the paper provides a novel characterization of time-varying---not necessarily periodic---turnpike solutions via reduced OCPs. Moreover, we introduced a notion of dissipation of optimal solutions with respect to the distance to a manifold (here the trim manifold) which implies that optimal system operation occurs on this manifold. The paper has also shown that the very same dissipativity condition implies the existence of a measure turnpike with respect to the trim manifold, i.e., the optimal solutions will spend only limited amount of time  far from this manifold. 
In sum, the present paper  introduced a novel manifold generalization of the established dissipativity notion for OCPs. This way it addresses the gap between the symmetry-based analysis of OCPs of mechanical systems and dissipativity-based turnpike analysis. 

Future work should discuss how the developed notions can be leveraged in context of receding-horizon optimal control. Moreover, it would be interesting to generalize the concept of manifold turnpikes even further and to consider symmetries induced by non-mechanical systems.

\bibliographystyle{ieeetr}      

\bibliography{FaulFlas20}   

\begin{thebibliography}{10}

\bibitem{MarsdenRatiu}
J.~E. Marsden and T.~S. Ratiu, {\em Introduction to mechanics and symmetry},
  vol.~17 of {\em Texts in Applied Mathematics}.
\newblock Springer, 2nd~ed., 1999.

\bibitem{BuLe04}
F.~Bullo and A.~D. Lewis, {\em Geometric Control of Mechanical Systems},
  vol.~49 of {\em Texts in Applied Mathematics}.
\newblock Springer, 2004.

\bibitem{Bloch}
A.~M. Bloch, {\em Nonholonomic mechanics and control}.
\newblock Springer, 2003.

\bibitem{FrDaFe05}
E.~Frazzoli, M.~Dahleh, and E.~Feron, ``Maneuver-based motion planning for
  nonlinear systems with symmetries,'' {\em IEEE Transactions on Robotics},
  vol.~21, no.~6, pp.~1077--1091, 2005.

\bibitem{FOK12}
K.~Fla{\ss}kamp, S.~{Ober-Bl\"obaum}, and M.~Kobilarov, ``Solving optimal
  control problems by exploiting inherent dynamical systems structures,'' {\em
  Journal of Nonlinear Science}, vol.~22, no.~4, pp.~599--629, 2012.

\bibitem{Fla2013}
K.~Fla{\ss}kamp, {\em On the Optimal Control of Mechanical Systems -- Hybrid
  Control Strategies and Hybrid Dynamics}.
\newblock PhD thesis, University of Paderborn, 2013.

\bibitem{FlasOber19}
K.~Fla\ss{}kamp, S.~Ober-Bl\"{o}baum, and K.~Worthmann, ``Symmetry and motion
  primitives in model predictive control,'' {\em Mathematics of Control,
  Signals, and Systems}, vol.~31, pp.~455--485, 2019.

\bibitem{Dorfman58}
R.~Dorfman, P.~Samuelson, and R.~Solow, {\em Linear Programming and Economic
  Analysis}.
\newblock McGraw-Hill, 1958.

\bibitem{Ramsey28}
F.~P. Ramsey, ``A mathematical theory of saving,'' {\em The Economic Journal},
  vol.~38, no.~152, pp.~543--559, 1928.

\bibitem{vonNeumann38}
J.~von Neumann, ``{\"U}ber ein \"o{}konomisches {G}leichungssystem und eine
  {V}erallgemeinerung des {B}rouwerschen {F}ixpunktsatzes,'' in {\em
  {E}rgebnisse eines {M}athematischen {S}eminars} (K.~Menger, ed.), 1938.

\bibitem{Mckenzie76}
L.~McKenzie, ``Turnpike theory,'' {\em Econometrica: Journal of the Econometric
  Society}, vol.~44, no.~5, pp.~841--865, 1976.

\bibitem{Carlson91}
D.~Carlson, A.~Haurie, and A.~Leizarowitz, {\em Infinite Horizon Optimal
  Control: Deterministic and Stochastic Systems}.
\newblock Springer, 1991.

\bibitem{Lance20a}
G.~Lance, E.~Tr{\'e}lat, and E.~Zuazua, ``Shape turnpike for linear parabolic
  {PDE} models,'' {\em Systems \& Control Letters}, vol.~142, p.~104733, 2020.

\bibitem{Gugat19a}
M.~Gugat and F.~Hante, ``On the turnpike phenomenon for optimal boundary
  control problems with hyperbolic systems,'' {\em SIAM Journal on Control and
  Optimization}, vol.~57, no.~1, pp.~264--289, 2019.

\bibitem{Gruene2019}
L.~Gr\"une, M.~Schaller, and A.~Schiela, ``Exponential sensitivity and turnpike
  analysis for linear quadratic optimal control of general evolution
  equations,'' {\em Journal of Differential Equations}, vol.~268, no.~12,
  pp.~7311--7341, 2020.

\bibitem{Stieler14a}
T.~Damm, L.~Gr{\"u}ne, M.~Stieler, and K.~Worthmann, ``An exponential turnpike
  theorem for dissipative optimal control problems,'' {\em SIAM Journal on
  Control and Optimization}, vol.~52, no.~3, pp.~1935--1957, 2014.

\bibitem{Gruene13a}
L.~Gr{\"u}ne, ``Economic receding horizon control without terminal
  constraints,'' {\em Automatica}, vol.~49, no.~3, pp.~725--734, 2013.

\bibitem{kit:faulwasser18c}
T.~Faulwasser, L.~Gr\"une, and M.~M{\"u}ller, ``Economic nonlinear model
  predictive control: Stability, optimality and performance,'' {\em Foundations
  and Trends in Systems and Control}, vol.~5, no.~1, pp.~1--98, 2018.

\bibitem{Gruene16a}
L.~Gr{\"u}ne and M.~M{\"u}ller, ``On the relation between strict dissipativity
  and turnpike properties,'' {\em Systems \& Control Letters}, vol.~90,
  pp.~45--53, 2016.

\bibitem{Faulwasser2017}
T.~Faulwasser, M.~Korda, C.~N. Jones, and D.~Bonvin, ``On turnpike and
  dissipativity properties of continuous-time optimal control problems,'' {\em
  Automatica}, vol.~81, pp.~297--304, 2017.

\bibitem{tudo:faulwasser20l}
T.~Faulwasser and L.~Gr\"une, ``Turnpike properties in optimal control: An
  overview of discrete-time and continuous-time results,'' in {\em Handbook of
  Numerical Analysis} (E.~Zuazua and E.~Trelat, eds.).
\newblock Preprint available at arxiv:2011.13670.

\bibitem{tudo:faulwasser20a}
T.~Faulwasser and C.~Kellett, ``On continuous-time infinite horizon optimal
  control -- {D}issipativity, stability and transversality,''
\newblock Preprint available at arxiv:2001.09601.

\bibitem{Samuelson76}
P.~A. Samuelson, ``The periodic turnpike theorem,'' {\em Nonlinear Analysis:
  Theory, Methods \& Applications}, vol.~1, no.~1, pp.~3--13, 1976.

\bibitem{FaulFlas19}
T.~Faulwasser, K.~Fla\ss{}kamp, S.~Ober-Bl\"{o}baum, and K.~Worthmann,
  ``Towards velocity turnpikes in optimal control of mechanical systems,'' {\em
  IFAC PapersOnLine 2019}, vol.~52, no.~16, pp.~490--495, 2019.

\bibitem{tudo:faulwasser20e}
T.~Faulwasser, K.~Fla\ss{}kamp, S.~Ober-Bl\"obaum, and K.~Worthmann, ``A
  dissipativity characterization of velocity turnpikes in optimal control
  problems for mechanical systems,'' in {\em Proceeding of 24th International
  Symposium on Mathematical Theory of Networks and Systems}, 2020.
\newblock Preprint available at arXiv:2002.04388.

\bibitem{Schaller2020a}
M.~Schaller, F.~Philipp, T.~Faulwasser, K.~Worthmann, and B.~Maschke, ``Control
  of port-{H}amiltonian systems with minimal energy supply,'' in {\em
  Proceedings of the European Control Conference}, 2021.
\newblock Preprint available at arXiv:2011.10296.

\bibitem{Trelat15a}
E.~Tr{\'e}lat and E.~Zuazua, ``The turnpike property in finite-dimensional
  nonlinear optimal control,'' {\em Journal of Differential Equations},
  vol.~258, no.~1, pp.~81--114, 2015.

\bibitem{kit:zanon18a}
M.~Zanon and T.~Faulwasser, ``Economic {MPC} without terminal constraints:
  Gradient-correcting end penalties enforce stability,'' {\em Journal of
  Process Control}, vol.~63, pp.~1--14, 3 2018.

\bibitem{tudo:faulwasser20h}
T.~Faulwasser and M.~Zanon, ``Primal or dual terminal constraints in economic
  {MPC}? comparison and insights,'' in {\em Recent Advances in Model Predictive
  Control: Theory, Algorithms, and Applications} (T.~Faulwasser, M.~M{\"u}ller,
  and K.~Worthmann, eds.), Lecture Notes in Control and Information Sciences,
  Springer, 2021.

\bibitem{Nijmeijer90a}
H.~Nijmeijer and A.~{van der Schaft}, {\em Nonlinear Dynamical Control
  Systems}.
\newblock Springer, 1990.

\bibitem{Isidori95a}
A.~Isidori, {\em Nonlinear Control Systems}.
\newblock Springer, 3rd~ed., 1995.

\bibitem{Olfati02a}
R.~Olfati-Saber, ``Normal forms for underactuated mechanical systems with
  symmetry,'' {\em IEEE Transactions on Automatic Control}, vol.~47, no.~2,
  pp.~305--308, 2002.

\bibitem{Locatelli01}
A.~Locatelli, {\em Optimal control: an introduction}.
\newblock Birkh{\"a}user, 2001.

\bibitem{Hartl95}
R.~Hartl, S.~Sethi, and R.~Vickson, ``A survey of the maximum principles for
  optimal control problems with state constraints,'' {\em SIAM Review},
  vol.~37, no.~2, pp.~181--218, 1995.

\bibitem{Sont98}
E.~Sontag, {\em {Mathematical Control Theory - Deterministic Finite Dimensional
  Systems}}.
\newblock Texts in Applied Mathematics, Springer, second~ed., 1998.

\bibitem{Kell14}
C.~Kellett, ``A compendium of comparison function results,'' {\em Mathematics
  of Control, Signals, and Systems}, vol.~26, no.~3, pp.~339--374, 2014.

\bibitem{Willems71a}
J.~Willems, ``Least squares stationary optimal control and the algebraic
  riccati equation,'' {\em IEEE Transactions on Automatic Control}, vol.~16,
  no.~6, pp.~621--634, 1971.

\bibitem{Willems72a}
J.~Willems, ``Dissipative dynamical systems part i: General theory,'' {\em
  Archive for Rational Mechanics and Analysis}, vol.~45, no.~5, pp.~321--351,
  1972.

\bibitem{Willems72b}
J.~Willems, ``Dissipative dynamical systems part ii: Linear systems with
  quadratic supply rates,'' {\em Archive for Rational Mechanics and Analysis},
  vol.~45, no.~5, pp.~352--393, 1972.

\bibitem{Willems07a}
J.~Willems, ``Dissipative dynamical systems,'' {\em European Journal of
  Control}, vol.~13, no.~2-3, pp.~134--151, 2007.

\bibitem{Moylan14a}
P.~Moylan, ``Dissipative systems and stability,'' {\em Lecture notes in
  collaboration with D.\ Hill, University of Newcastle, www.pmoylan.org}, 2014.

\bibitem{Coro20}
J.-M. Coron, L.~Gr\"{u}ne, and K.~Worthmann, ``Model predictive control, cost
  controllability, and homogeneity,'' {\em SIAM Journal on Control and
  Optimization}, vol.~58, no.~5, pp.~2979--2996, 2020.

\bibitem{TunaMessinaTeel2006}
S.~E. Tuna, M.~J. Messina, and A.~R. Teel, ``{S}horter horizons for model
  predictive control,'' in {\em Proceedings of the American Control
  Conference}, 2006.

\bibitem{Grun09}
L.~Gr{\"{u}}ne, ``{Analysis and design of unconstrained nonlinear {MPC} schemes
  for finite and infinite dimensional systems},'' {\em SIAM Journal on Control
  and Optimization}, vol.~48, no.~2, pp.~1206--1228, 2009.

\bibitem{GrunPann10}
L.~Gr{\"{u}}ne, J.~Pannek, M.~Seehafer, and K.~Worthmann, ``{Analysis of
  unconstrained nonlinear MPC schemes with varying control horizon},'' {\em
  SIAM Journal on Control and Optimization}, vol.~48, no.~8, pp.~4938--4962,
  2010.

\bibitem{ReblAllg12}
M.~Reble and F.~Allg{\"o}wer, ``Unconstrained model predictive control and
  suboptimality estimates for nonlinear continuous-time systems,'' {\em
  Automatica}, vol.~48, no.~8, pp.~1812--1817, 2012.

\bibitem{WortRebl14}
K.~Worthmann, M.~Reble, L.~Gr{\"{u}}ne, and F.~Allg{\"{o}}wer, ``{The Role of
  Sampling for Stability and Performance in Unconstrained Nonlinear Model
  Predictive Control},'' {\em SIAM Journal on Control and Optimization},
  vol.~52, no.~1, pp.~581--605, 2014.

\bibitem{WortRebl15}
K.~Worthmann, M.~Reble, L.~Gr\"{u}ne, and F.~Allg\"{o}wer, ``Unconstrained
  nonlinear mpc: Performance estimates for sampled-data systems with zero order
  hold,'' in {\em Proceedings of the 54th IEEE Conference on Decision and
  Control}, pp.~4971--4976, 2015.

\bibitem{Wort11}
K.~Worthmann, {\em {S}tability {A}nalysis of {U}nconstrained {R}eceding
  {H}orizon {C}ontrol {S}chemes}.
\newblock PhD thesis, University of Bayreuth, 2011.

\bibitem{MullWort17}
M.~A. M{\"{u}}ller and K.~Worthmann, ``{Quadratic costs do not always work in
  MPC},'' {\em Automatica}, vol.~82, pp.~269--277, 2017.

\bibitem{Angeli2012}
D.~{Angeli}, R.~{Amrit}, and J.~B. {Rawlings}, ``On average performance and
  stability of economic model predictive control,'' {\em IEEE Transactions on
  Automatic Control}, vol.~57, no.~7, pp.~1615--1626, 2012.

\bibitem{Carlson90a}
D.~Carlson, ``Uniformly overtaking and weakly overtaking optimal solutions in
  infinite-horizon optimal control: when optimal solutions are agreeable,''
  {\em Journal of Optimization Theory and Applications}, vol.~64, no.~1,
  pp.~55--69, 1990.

\bibitem{Pirkelmann2020}
S.~Pirkelmann, {\em Economic Model Predictive Control and Time-Varying
  Systems}.
\newblock PhD thesis, Universit{\"a}t Bayreuth, Germany, 2020.

\bibitem{Claudel10a}
C.~Claudel and A.~Bayen, ``Lax--{H}opf based incorporation of internal boundary
  conditions into {H}amilton--{J}acobi equation. {P}art {I}: Theory,'' {\em
  IEEE Transactions on Automatic Control}, vol.~55, no.~5, pp.~1142--1157,
  2010.

\bibitem{Claudel10b}
C.~Claudel and A.~Bayen, ``Lax--{H}opf based incorporation of internal boundary
  conditions into {H}amilton-{J}acobi equation. {P}art {II}: Computational
  methods,'' {\em IEEE Transactions on Automatic Control}, vol.~55, no.~5,
  pp.~1158--1174, 2010.

\bibitem{SaakeOberBl}
S.~Ober-Blöbaum and N.~Saake, ``Construction and analysis of higher order
  galerkin variational integrators,'' {\em Advances in Computational
  Mathematics}, vol.~41, pp.~955--986, 2015.

\end{thebibliography}

\end{document}